\begin{document}\sloppy

\title*{Signed group orthogonal designs and their applications}
% Use \titlerunning{Short Title} for an abbreviated version of
% your contribution title if the original one is too long
\author{Ebrahim Ghaderpour}
% Use \authorrunning{Short Title} for an abbreviated version of
% your contribution title if the original one is too long
\institute{Ebrahim Ghaderpour\at Department of Earth and Space Science and Engineering, York University \\ 4700 Keele Street, Toronto, Ontario, Canada \ M3J 1P3 \\ \email{ebig2@yorku.ca}}
%
% Use the package "url.sty" to avoid
% problems with special characters
% used in your e-mail or web address
%
\maketitle

\abstract*{Craigen introduced and studied  {\it signed group Hadamard matrices}  extensively in \cite{Craigenthesis, Craigen}.  Livinskyi \cite{Ivan}, following Craigen's lead,  studied and provided a better estimate for the asymptotic existence of signed group Hadamard matrices and consequently  improved the asymptotic existence of Hadamard matrices. In this paper, we introduce and study signed group orthogonal designs. The main results include a method  for finding signed group orthogonal designs for any $k$-tuple of positive integer and then an application  to obtain orthogonal designs from signed group orthogonal designs, namely, for any $k$-tuple $\big(u_1, u_2, \ldots, u_{k}\big)$ of positive integers, we show that there is an integer $N=N(u_1, u_2, \ldots, u_k)$ such that for each $n\ge N$, a full orthogonal design (no zero entries) of type $\big(2^nu_1,2^nu_2,\ldots,2^nu_{k}\big)$ exists . This is an alternative approach to the results obtained in \cite{EK}.
\\ \\ \noindent {\bf Keywords} Asymptotic existence, Circulant matrix, Hadamard matrix, Orthogonal design, Signed group.}

\abstract{Craigen introduced and studied  {\it signed group Hadamard matrices}  extensively in \cite{Craigenthesis, Craigen}.  Livinskyi \cite{Ivan}, following Craigen's lead,  studied and provided a better estimate for the asymptotic existence of signed group Hadamard matrices and consequently  improved the asymptotic existence of Hadamard matrices. In this paper, we introduce and study signed group orthogonal designs. The main results include a method  for finding signed group orthogonal designs for any $k$-tuple of positive integer and then an application  to obtain orthogonal designs from signed group orthogonal designs, namely, for any $k$-tuple $\big(u_1, u_2, \ldots, u_{k}\big)$ of positive integers, we show that there is an integer $N=N(u_1, u_2, \ldots, u_k)$ such that for each $n\ge N$, a full orthogonal design (no zero entries) of type $\big(2^nu_1,2^nu_2,\ldots,2^nu_{k}\big)$ exists . This is an alternative approach to the results obtained in \cite{EK}.
\\ \\ \noindent {\bf Keywords} Asymptotic existence, Circulant matrix, Hadamard matrix, Orthogonal design, Signed group.}

\section{Introduction}
\label{sec:1}
A {\it signed group} $S$ (see \cite{Craigen}) is a group with a distinguished central element, an element that commutes with all elements of the group, of order two. Denote the unit of a group as 1 and the distinguished central element of order two as -1.  In every signed group, the set $\{1,-1\}$ is a normal subgroup, and we call the number of elements in the quotient group $S/\big<-1\big>$ the order of signed group $S.$ So, a signed group of order $n$ is a group of order $2n.$ 
A signed group $T$ is called a {\it signed subgroup} of a signed group $S,$ if $T$ is a subgroup of $S$ and the distinguished central elements of $S$ and $T$ coincide. We denote this relation by $T\le S.$ 

\begin{example} There are a number of signed groups with different applications. We present some of them used in this work:

$(i)$ \ \ The trivial signed group $S_{\mathbb R}=\{1,-1\}$ is a signed group of order $1$. 

$(ii)$ \ The complex signed group $S_{\mathbb C}=\big<i; \ i^2=-1\big>=\{\pm 1, \pm i\}$ is a signed group of order $2$. 

$(iii)$ The Quaternion signed group $S_Q=\big<j,k; \ j^2=k^2=-1, jk=-kj\big>=\big\{\pm 1, \pm j, \pm k, \pm jk\big\}$ is a signed group of order $4$.

$(iv)$ The set of all monomial $\{0, \pm 1\}$-matrices of order $n$, $SP_n$, forms a group of order $2^nn!$ and a signed group of order $2^{n-1}n!.$
\end{example}

Let $S$ and $T$ be two signed groups. A {\it signed group homomorphism} $\phi: S \rightarrow T$ is a map such that for all $a,b \in S$, $\phi(ab)=\phi(a)\phi(b)$ and $\phi(-1)=-1.$
A {\it remrep}  ({\it real monomial representation}) is a signed group homomorphism  $\pi: S\rightarrow SP_n$.  A {\it faithful remrep} is a one to one remrep. 

Let $R$ be a ring with unit $1_R,$ and let $S$ be a signed group with distinguished central element $-1_S.$  Then  
$R[S]:=\big\{\sum_{i=1}^n r_is_i;  \ r_i\in R, \ s_i\in P\big\}$ is the signed group ring, where $P$ is a set of coset representatives of $S$ modulus $\big<-1_S\big>$ and for $r\in R, \ s\in P$, we make the identification $-rs=r(-s)$. Addition is defined termwise, and multiplication is defined by linear extension. For instance,  $r_1s_1(r_2s_2+r_3s_3)=r_1r_2s_1s_2+r_1r_3s_1s_3$, where $r_i\in R$ and $s_i\in P$, $i\in \{1,2,3\}$.

In this work, we choose $R=\mathbb{R}$. 
Suppose $x\in\mathbb{R}[S].$ Then $x=\sum_{i=1}^n r_is_i$, where $ r_i\in \mathbb{R}, \ s_i\in P$. The  {\it conjugation} of $x$, denoted $\overline{x}$, is defined as  
$\overline{x}:=\sum_{i=1}^n r_is^{-1}_i$.
Clearly, the conjugation is an involution, i.e., $\overline{\overline x}=x$ for all $x\in \mathbb{R}[S],$ and $\overline{xy}=\bar{y}\bar{x}$ for all $x,y\in \mathbb{R}[S].$ As an example, $\overline{\sqrt{2}j+3jk}=\sqrt{2}j^{-1}+3(jk)^{-1}=-\sqrt{2}j-3jk,$ where $j,k\in S_Q.$

For an $m\times n$ matrix $A=[a_{ij}]$ with entries in $\mathbb{R}[S]$ define its adjoint as an $n\times m$ matrix $A^*=\overline{A}^t=[\overline{a}_{ji}].$ Let $S$ be a signed group, and let $A=[a_{ij}]$ be a square matrix such that $a_{ij}\in\big\{0, \epsilon_1x_1, \ldots, \epsilon_kx_k\big\},$  where $\epsilon_{\ell}\in S$ and $x_{\ell}$ is a variable, $1\le \ell \le k.$ For each $a_{ij}=\epsilon_{\ell}x_{\ell}$ or 0, let $\overline{a}_{ij}=\overline{\epsilon}_{\ell}x_{\ell}$ or 0, and $|a_{ij}|=|\epsilon_{\ell}x_{\ell}|=x_{\ell}$ or 0. We define ${\rm abs}(A):=\big[|a_{ij}|\big].$ We call $A$  {\it quasisymmetric}, if ${\rm abs}(A)={\rm abs}(A^*),$ 
where $A^*=[\overline{a}_{ji}].$ Also, $A$ is called {\it  normal} if $AA^*=A^*A.$ 
The {\it support} of $A$ (see \cite{Craigen}) is defined by ${\rm supp}(A):=\big\{{\rm positions \ of \ all \ nonzero \ entries \ of \ } A\big\}$. 

Suppose $A=\big(a_1, a_2, \ldots, a_n\big)$ and $B=\big(b_1, b_2, \ldots, b_n\big)$ are two sequences with elements from $\big\{0,\epsilon_1x_1, \ldots, \epsilon_kx_k\big\},$ where the $x_k$'s are variables and $\epsilon_k\in S$  $(1\le k\le n)$ for some signed group $S$. We use  $A_{\overline{R}}$ to denote the sequence whose elements are those of $A$, conjugated and in reverse order (see \cite{CHK}), i.e.,
$A_{\overline{R}}=\big(\overline{a}_n, \ldots, \overline{a}_2, \overline{a}_1\big).$
We say $A$ is {\it quasireverse} to $B$ if ${\rm abs}(A_{\overline{R}})={\rm abs}(B).$ 

A circulant matrix $C={\rm circ}\big(a_1, a_2, \ldots, a_n\big)$ (see \cite[chap. 4]{GS}) can be written as $C=a_1I_n+\sum_{k=1}^{n-1}a_{k+1}U^k,$ where $U={\rm circ}\big(0,1,0,\ldots,0\big).$ Therefore, any two circulant matrices of order $n$ with commuting entries commute. 
If $C={\rm circ}\big(a_1, a_2, \ldots, a_n\big),$ then $C^*={\rm circ}\big(\overline{a}_1, \overline{a}_n, \ldots, \overline{a}_2\big)$.

We use the notation $u_{(k)}$  to show $u$ repeats $k$ times. Suppose that $A$ and $B$ are two sequences of length $n$ such that $A$ is quasireverse to $B$. Let $D={\rm circ}\big(0_{(a+1)},A, 0_{(2b+1)}, B, 0_{(a)}\big)$, where $a$ and $b$ are nonnegative integers and let $m=2a+2b+2n+2$. Then $D^*={\rm circ}\big(0_{(a+1)},B_{\overline{R}}, 0_{(2b+1)}, A_{\overline{R}}, 0_{(a)}\big)$ and ${\rm abs}(D)={\rm abs}(D^*).$ Hence, $D$ is a quasisymmetric circulant matrix of order $m$.

The {\it non-periodic autocorrelation function} \cite{KHH}  of a sequence $A=(x_1, \ldots, x_n)$ of commuting square complex matrices of order $m,$ is defined by
$$N_{A}(j):=\left\{
      \begin{array}{l l}
      \displaystyle \sum_{i=1}^{n-j} x_{i+j}x_i^* \ \ \ {\rm if} \ \  j=0,1,2,\ldots, n-1& \\
       0 \ \ \ \ \ \ \ \ \ \ \ \ \ \ j\ge n& 
       \end{array} \right.$$
where $x_i^*$ denotes the conjugate transpose of $x_i.$
A set $\{A_1, A_2, \ldots, A_{\ell}\}$ of sequences (not necessarily in the same length) is said to have zero autocorrelation if for all $j>0,$
$\sum_{k=1}^{\ell}N_{A_k}(j)=0.$  Sequences having zero autocorrelation are called {\it complementary}.

A pair $(A;B)$ of $\{\pm 1\}$-complementary sequences of length $n$ is called a {\it Golay pair}  of length $n$, and a pair $(A_1;B_1)$  of $\{\pm x, \pm y\}$-complementary sequences  of length $n_1$ is called a {\it Golay pair in two variables $x$ and $y$} of length $n_1$. The length $n$ is called  {\it Golay number}. 
Similarly, a pair $(C;D)$ of $\{\pm 1, \pm i\}$-complementary sequences  of length $m$ is called a {\it complex Golay pair} of length $m$, and a pair $(C_1;D_1)$  of $\{\pm x, \pm i x, \pm y, \pm iy\}$-complementary sequences of length $m_1$ is called a {\it complex Golay pair in two variables $x$ and $y$} of length $m_1$. The length $m$ is called  {\it complex Golay number}. In this paper, the sequences $A_1$ and $C_1$ are assumed to be quasireverse to $B_1$ and $D_1$, respectively. 

Craigen, Holzmann and Kharaghani in \cite{CHK} showed that if $g_1$ and $g_2$ are complex Golay numbers and $g$ is an even Golay number, then $gg_1g_2$ is a complex Golay number. Using this, they showed the following theorem.
\begin{theorem}\label{CGN2}
All numbers of the form $m=2^{a+u}3^b5^c11^d13^e$ are complex Golay numbers, where $a,b,c,d,e$ and $u$ are non-negative integers such that $b+c+d+e\le a+2u+1$ and $u\le c+e.$
\end{theorem}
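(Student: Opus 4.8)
The plan is to realize every admissible $m$ as the product of all the labels of a suitable rooted binary tree, and to conclude that it is a complex Golay number by applying, from the leaves upward, the multiplicativity result of \cite{CHK} quoted above: if $g_1$ and $g_2$ are complex Golay numbers and $g$ is an even Golay number, then $gg_1g_2$ is a complex Golay number. First I would record the building blocks: $1,3,5,11,13$ are complex Golay numbers (length $1$ is trivial, the rest classical) and $2,10,26$ are even Golay numbers (the real Golay pairs of lengths $2,10,26$ are classical). Call a rooted tree \emph{admissible} if every internal node has exactly two children, every leaf carries a label from $\{1,3,5,11,13\}$, and every internal node carries a label from $\{2,10,26\}$; define its \emph{value} to be the product of all its labels. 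Then, by induction on the number of internal nodes, the value of every admissible tree is a complex Golay number: a tree with no internal node is a single leaf, whose value lies in $\{1,3,5,11,13\}$; and if the root is internal with label $g\in\{2,10,26\}$ and its two subtrees have values $g_1,g_2$ — complex Golay numbers by the inductive hypothesis — then the value $gg_1g_2$ of the whole tree is a complex Golay number by the quoted result. So it suffices to exhibit, for every $m=2^{a+u}3^b5^c11^d13^e$ with $b+c+d+e\le a+2u+1$ and $u\le c+e$, an admissible tree with value $m$.

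Since such a tree with $t$ internal nodes has exactly $t+1$ leaves and one exists for every $t\ge 0$, only the multiset of labels matters, so I would reduce to choosing non-negative integers $\ell_1,\ell_3,\ell_5,\ell_{11},\ell_{13}$ (the numbers of leaves labelled $1,3,5,11,13$) and $n_2,n_{10},n_{26}$ (the numbers of internal nodes labelled $2,10,26$) subject to the leaf/node balance
\[
\ell_1+\ell_3+\ell_5+\ell_{11}+\ell_{13}=n_2+n_{10}+n_{26}+1
\]
and to the requirement that the product of all labels be $m$. Matching exponents prime by prime forces $\ell_3=b$, $\ell_{11}=d$, $n_2+n_{10}+n_{26}=a+u$ (each of $2,10,26$ contributes exactly one factor $2$), $n_{10}=c-\ell_5$ and $n_{26}=e-\ell_{13}$; substituting into the balance equation gives $\ell_1=a+u+1-b-d-\ell_5-\ell_{13}$. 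Everything is therefore governed by the single parameter $\sigma:=\ell_5+\ell_{13}$, and non-negativity of all the counts, subject to $\ell_5\le c$ and $\ell_{13}\le e$, amounts exactly to
\[
\max\{0,\,c+e-a-u\}\ \le\ \sigma\ \le\ \min\{c+e,\,a+u+1-b-d\}.
\]

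The main point is then to check that this interval contains an integer, and this is exactly where the two hypotheses are used: $b+d\le a+u+1$ holds because $b+d\le(a+2u+1)-(c+e)\le a+u+1$, using first $b+c+d+e\le a+2u+1$ and then $u\le c+e$; and $c+e-a-u\le a+u+1-b-d$ rearranges to $b+c+d+e\le 2(a+u)+1$, which follows from $b+c+d+e\le a+2u+1$ because $a\ge0$. The other two required inequalities, $0\le c+e$ and $c+e-a-u\le c+e$, are trivial, and since both endpoints are integers the interval does contain an integer $\sigma$. Fixing such a $\sigma$ and splitting it as $\ell_5=\min\{\sigma,c\}$, $\ell_{13}=\sigma-\ell_5$ (which is at most $e$ because $\sigma\le c+e$) produces admissible label counts whose label-product is $m$; any admissible tree built on these counts then witnesses that $m$ is a complex Golay number.

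I expect the only genuine work to be the bookkeeping above, together with a quick check of the small cases it subsumes — in particular $a+u=0$, where the hypotheses force $m\in\{1,3,5,11,13\}$ and the tree is a single leaf. There is no analytic difficulty; externally the argument relies only on having the complex Golay pairs of lengths $3,5,11,13$ and the real Golay pairs of lengths $10$ and $26$ available to serve as the leaves and internal labels.
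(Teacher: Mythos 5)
Your argument is correct. Note that the paper itself gives no proof of this theorem --- it is quoted from \cite{CHK}, with the remark that it follows from the multiplicativity result that $gg_1g_2$ is a complex Golay number whenever $g_1,g_2$ are complex Golay numbers and $g$ is an even Golay number. Your tree construction is a sound, self-contained reconstruction along exactly those lines: the exponent bookkeeping reduces correctly to the single parameter $\sigma=\ell_5+\ell_{13}$, and the two nontrivial interval inequalities ($b+d\le a+u+1$ and $b+c+d+e\le 2a+2u+1$) do follow from the stated hypotheses as you argue, so the interval for $\sigma$ is a nonempty integer interval and the construction goes through.
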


The following lemma is immediate from the definition of complex Golay pair.
\begin{lemma}\label{twovariables}
Suppose that $(A;B)$ is a complex Golay pair of length $m$. Then $\big(\!(xA,yB);(yA,-xB)\!\big)$ is a complex Golay pair of length $2m$ in two variables $x$ and $y$. 
\end{lemma}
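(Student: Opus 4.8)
The plan is to verify the two defining requirements directly: that $\big((xA,yB);(yA,-xB)\big)$ is a pair of $\{\pm x,\pm ix,\pm y,\pm iy\}$-sequences with one quasireverse to the other, and that the pair is complementary. Write $A=(a_1,\dots,a_m)$ and $B=(b_1,\dots,b_m)$ with $a_i,b_i\in\{\pm1,\pm i\}$, and set $C=(xA,yB)=(xa_1,\dots,xa_m,yb_1,\dots,yb_m)$ and $D=(yA,-xB)=(ya_1,\dots,ya_m,-xb_1,\dots,-xb_m)$, both of length $2m$. First I would record the two easy structural facts: the entries of $C$ and $D$ lie in $\{\pm x,\pm ix,\pm y,\pm iy\}$, so these are legitimate candidates; and, since conjugation fixes the real variables $x,y$ and permutes $\{\pm1,\pm i\}$, one computes ${\rm abs}(C_{\overline R})=(y_{(m)},x_{(m)})={\rm abs}(D)$, so $C$ is quasireverse to $D$ as demanded by the paper's convention on two-variable complex Golay pairs.

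The heart of the argument is the computation of $N_C(j)$ and $N_D(j)$. Because $x,y$ are commuting real variables, $(xa_i)^*=x\,a_i^*$ and $(yb_i)^*=y\,b_i^*$, where $*$ is complex conjugation on the scalars $\pm1,\pm i$. For $1\le j\le m-1$, I would split the sum $N_C(j)=\sum_{i=1}^{2m-j}c_{i+j}c_i^*$ according to whether $i$ and $i+j$ fall in the first half $\{1,\dots,m\}$ or the second half $\{m+1,\dots,2m\}$: the ``first--first'' pairs contribute $x^2N_A(j)$, the ``second--second'' pairs contribute $y^2N_B(j)$, and the straddling ``first--second'' pairs (with $m-j<i\le m$) contribute $xy\,X_j$, where $X_j:=\sum_{\ell=1}^{j}b_\ell\,a_{m-j+\ell}^{\,*}$. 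Running the same decomposition for $D$ swaps the coefficients $x^2\leftrightarrow y^2$ on the diagonal terms and attaches a single factor $-x$ to the $B$-part, giving $N_D(j)=y^2N_A(j)+x^2N_B(j)-xy\,X_j$. Adding, $N_C(j)+N_D(j)=(x^2+y^2)\big(N_A(j)+N_B(j)\big)=0$, since $(A;B)$ is a complex Golay pair. For $m\le j\le 2m-1$ only straddling pairs survive, so $N_C(j)=xy\sum_{i=1}^{2m-j}b_{i+j-m}a_i^{\,*}$ and $N_D(j)=-xy\sum_{i=1}^{2m-j}b_{i+j-m}a_i^{\,*}$ cancel outright, and for $j\ge 2m$ both are $0$ by definition. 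Hence $\{C,D\}$ is complementary and the lemma follows.

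The only point requiring care is the index bookkeeping in decomposing the autocorrelation sums: one must check that the straddling contribution to $N_C(j)$ and to $N_D(j)$ is literally the same expression $X_j$ up to the sign forced by the $-x$ in $D$, and that the $x^2$ and $y^2$ coefficients of the $N_A$ and $N_B$ parts are interchanged between $N_C$ and $N_D$. There is no genuine obstacle here; as noted, the statement is immediate once $N_C(j)$ and $N_D(j)$ are expanded, the role of the ``core'' $\big(\begin{smallmatrix}x&y\\ y&-x\end{smallmatrix}\big)$ being exactly to force the cross terms to cancel while the diagonal terms combine into the scalar $x^2+y^2$.
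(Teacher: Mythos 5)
Your proof is correct, and it is exactly the direct verification the paper has in mind: the paper offers no proof, declaring the lemma ``immediate from the definition,'' and your expansion of $N_C(j)+N_D(j)$ into the $x^2$, $y^2$ and cross terms, together with the quasireverse check, is precisely the computation being suppressed. The index bookkeeping for the straddling terms and the cancellation $xy\,X_j - xy\,X_j$ all check out.
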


From Theorem \ref{CGN2} and Lemma \ref{twovariables}, we have the following result.
\begin{corollary}\label{CGN1}
There is a complex Golay pair in two variables of length $n=2^{a+u+1}3^b5^c11^d13^e$, where $a,b,c,d,e$ and $u$ are non-negative integers such that $b+c+d+e\le a+2u+1$ and $u\le c+e.$
\end{corollary}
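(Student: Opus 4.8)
The plan is to obtain the desired two-variable pair by doubling a one-variable complex Golay pair supplied by Theorem \ref{CGN2}, via Lemma \ref{twovariables}, and then to verify that the result respects the paper's standing quasireverse convention on two-variable pairs.

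First I would apply Theorem \ref{CGN2}. Since $a,b,c,d,e,u$ are non-negative integers satisfying $b+c+d+e\le a+2u+1$ and $u\le c+e$, the integer $m=2^{a+u}3^b5^c11^d13^e$ is a complex Golay number; fix a complex Golay pair $(A;B)$ of length $m$ with entries in $\{\pm 1,\pm i\}$.

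Next I would invoke Lemma \ref{twovariables} with this pair, which gives that $\big((xA,yB);(yA,-xB)\big)$ is a complex Golay pair in the two variables $x$ and $y$ of length $2m$. Because $2m=2^{a+u+1}3^b5^c11^d13^e=n$ and the arithmetic constraints on the exponents are exactly those of Theorem \ref{CGN2}, this already has the stated length $n$.

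It remains to check the convention that the first sequence of a two-variable complex Golay pair be quasireverse to the second, i.e. ${\rm abs}\big((C_1)_{\overline{R}}\big)={\rm abs}(D_1)$ for $C_1:=(xA,yB)$ and $D_1:=(yA,-xB)$. Writing $A=(a_1,\ldots,a_m)$ and $B=(b_1,\ldots,b_m)$ with $a_i,b_i\in\{\pm 1,\pm i\}\subseteq S_{\mathbb C}$, the sequence $(C_1)_{\overline{R}}$ equals $(\overline{y b_m},\ldots,\overline{y b_1},\overline{x a_m},\ldots,\overline{x a_1})$, so ${\rm abs}\big((C_1)_{\overline{R}}\big)=(y_{(m)},x_{(m)})$, and likewise ${\rm abs}(D_1)=(y_{(m)},x_{(m)})$; hence $C_1$ is quasireverse to $D_1$, with no condition on the internal order of $A$ or $B$. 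I do not expect a genuine obstacle in this argument — all the substance sits inside Theorem \ref{CGN2}, and the corollary is just a length-doubling through Lemma \ref{twovariables} together with this routine bookkeeping of absolute-value patterns.
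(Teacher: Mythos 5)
Your proposal is correct and matches the paper's (implicit) argument exactly: the corollary is stated as an immediate consequence of Theorem \ref{CGN2} and Lemma \ref{twovariables}, which is precisely the length-doubling you carry out. Your additional check that $(xA,yB)$ is quasireverse to $(yA,-xB)$ is a sensible piece of bookkeeping the paper leaves unstated, and it goes through as you describe.
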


In Section \ref{sec2}, we introduce signed group orthogonal designs, and will show some of their properties. Then as one of their applications, in Theorem \ref{sodtood}, we show how to obtain orthogonal designs from signed group orthogonal designs. In Section \ref{sec4}, using signed group orthogonal designs, we prove Theorems \ref{thirdbound} and \ref{lastbound} that give two different bounds for the asymptotic existence of orthogonal designs, namely, for any $k$-tuple $(u_1, u_2, \ldots, u_{k})$ of positive integers, there is an integer $N=N(u_1, u_2, \ldots, u_k)$ such that a full orthogonal design of type $\big(2^nu_1,2^nu_2,\ldots,2^nu_{k}\big)$ exists 
for each $n\ge N$.

In this paper, $P:= \left[ \begin {array}{cc} 0&1\\ 1&0\end {array} \right]\!\!,$
$Q:= \left[ \begin {array}{cc} 1&0\\ 0&-\end {array} \right]\!\!,$ 
$R:= \left[ \begin {array}{cc} 0&1\\ -&0\end {array} \right]$  and $I_d$ is the identity matrix of order $d$, where $-$ is $-1$.
%%%%%%%%%%%%%%%%%%%%%%%%%%%%%%%%%%%%%%%%%%%%%%%%%%%%%%%%%%%%%%%%%%%%%%%%%%%%%%%%%%%%%%%%%%%%%%%%%%%%%%%%%%%
\section{Signed group orthogonal designs and some of their properties}\label{sec2}

A {\it signed group orthogonal design}, SOD,  of type $\big(u_1, \ldots, u_k\big),$ where $u_1, \ldots, u_k$ are positive integers, and of order $n,$ is a square matrix $X$ of order $n$ with entries
from $\{0, \epsilon_1 x_1, \ldots, \epsilon_k x_k\},$ where the $x_i$'s are variables and
$\epsilon_j\in S,$ $1\le j\le k,$ for some signed group $S$,  that satisfies
\begin{align*}
XX^{*}= \Bigg(\sum_{i=1}^k u_i x_i^2 \Bigg)I_n. 
\end{align*}
We denote it by $SOD\big(n; \ u_1, \ldots, u_k\big).$

Equating all variables to 1 in any SOD of order $n$ results in a {\it signed group weighing matrix} of order $n$ and weight $w$ which is denoted by $SW(n,w)$, where $w$ is the number of nonzero entries in each row (column) of the SOD.  We call an SOD with no zero entries a full SOD.
Equating all variables to 1 in any full SOD of order $n$ results in a {\it  signed group Hadamard matrix} of order $n$ which is denoted by $SH(n,S).$

Craigen $\cite{Craigen}$ proved the following fundamental theorem and applied it to demonstrate a novel and new method for the asymptotic existence of signed group Hadamard matrices and consequently Hadamard matrices.
\begin{theorem}
For any odd positive integer $p$, there exists a circulant $SH\big(2p, SP_{2^{2N(p)-1}}\big).$
\end{theorem}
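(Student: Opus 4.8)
\noindent\emph{Sketch of the intended proof.} A faithful remrep $\pi\colon S\hookrightarrow SP_{m}$ lets one replace every entry of a matrix over $\mathbb{R}[S]$ by its image under $\pi$ while preserving products, adjoints and the circulant structure, and if $m\le 2^{2N(p)-1}$ then $SP_{m}\le SP_{2^{2N(p)-1}}$. Hence it suffices to build a \emph{circulant} $SH(2p,S)$, i.e.\ a circulant matrix $X=\mathrm{circ}(c_{1},\dots,c_{2p})$ whose entries are units of some signed group $S$ with $XX^{*}=2p\,I_{2p}$, where $S$ admits a faithful remrep of degree at most $2^{2N(p)-1}$. Since iterated products of $S_{\mathbb{C}}$ and $S_{Q}$ have faithful remreps of $2$-power degree, the only real issue is how many such factors one is forced to use.

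The building blocks will be the complex Golay pairs in two variables produced by Corollary~\ref{CGN1} (through Theorem~\ref{CGN2} and Lemma~\ref{twovariables}); in particular, taking $b=c=d=e=u=0$ there gives such a pair of every length $2^{t}$. The plan is to write $p=n_{1}+n_{2}+\cdots+n_{k}$, where each $n_{i}$ is a length for which a suitable complex Golay pair over a signed group exists and no $n_{i}$ is too large relative to $p$; by a binary-type decomposition this can be arranged with $k=O(\log p)$ summands. For each $i$ I would choose a complex Golay pair $(A_{i};B_{i})$ of length $n_{i}$ with $A_{i}$ quasireverse to $B_{i}$, and form the quasisymmetric circulant matrix
\[
D_{i}=\mathrm{circ}\!\big(0_{(a_{i}+1)},\,A_{i},\,0_{(2b_{i}+1)},\,B_{i},\,0_{(a_{i})}\big)
\]
of order $2a_{i}+2b_{i}+2n_{i}+2=2p$, using the freedom in $a_{i},b_{i}\ge 0$, and the fact that cyclically shifting the defining sequence does not change $D_{i}D_{i}^{*}$, to make the supports of $D_{1},\dots,D_{k}$ pairwise disjoint and jointly equal to $\{1,\dots,2p\}$, while keeping in each $D_{i}$ odd-length zero gaps large enough that the periodic autocorrelation of its defining sequence reduces to $N_{A_{i}}+N_{B_{i}}$.

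I would then set $X=D_{1}+D_{2}+\cdots+D_{k}$, a \emph{full} circulant matrix of order $2p$ because the supports tile $\{1,\dots,2p\}$, and compute
\[
XX^{*}=\sum_{i}D_{i}D_{i}^{*}+\sum_{i<j}\big(D_{i}D_{j}^{*}+D_{j}D_{i}^{*}\big).
\]
By complementarity and the autocorrelation reduction, $D_{i}D_{i}^{*}=2n_{i}I_{2p}$, so $\sum_{i}D_{i}D_{i}^{*}=\big(\sum_{i}2n_{i}\big)I_{2p}=2p\,I_{2p}$, and it remains to kill the cross terms. This is where the signed group enters: multiply the two variables used in layer $i$ by successive anticommuting square roots of $-1$, say $\varepsilon_{i},\delta_{i}$ coming from the next $S_{\mathbb{C}}$- or $S_{Q}$-factor, chosen so that $D_{i}D_{j}^{*}=-(D_{i}D_{j}^{*})^{*}=-D_{j}D_{i}^{*}$ for all $i<j$ (and so that any residual cross-block contribution inside a single $D_{i}$ also cancels). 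Each layer costs a bounded number of new generators, so the resulting signed group has a faithful remrep of degree $O\!\big(c^{\,k}\big)=O\!\big(c^{\,O(\log p)}\big)$, and tracking the constants is precisely what the quantity $N(p)$ packages, yielding the stated bound $2^{2N(p)-1}$.

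The step I expect to be the main obstacle is this simultaneous arrangement: one must tile the cycle $\mathbb{Z}_{2p}$ by the $2k$ arcs carrying $A_{1},B_{1},\dots,A_{k},B_{k}$ so that, for every $i$, the complement of $\mathrm{supp}(D_{i})$ separates $A_{i}$ from $B_{i}$ into two odd-length cyclic gaps large enough for the periodic-to-non-periodic autocorrelation reduction, while at the same time the per-layer elements $\varepsilon_{i},\delta_{i}$ can be chosen consistently so that all $\binom{k}{2}$ cross terms vanish at once; the degenerate small cases (for instance $p=1$, where $\mathrm{circ}(1,i)$ over $S_{\mathbb{C}}$ already works, and $p=3$) must be checked directly. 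The remaining ingredients---quasisymmetry of each $D_{i}$, the autocorrelation reduction, and $\sum_{i}2n_{i}=2p$---are routine given the results quoted above.
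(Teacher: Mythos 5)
You have correctly identified the overall strategy (the theorem is Craigen's and is quoted in the paper without proof; the machinery that actually proves it is Lemma~\ref{ZS} and Theorem~\ref{cortoreal} applied to disjoint quasisymmetric circulant blocks built from Golay pairs of power-of-two length, one for each nonzero binary digit of $p$ --- which is what $N(p)$ counts). However, your sketch has two concrete gaps. First, a single circulant $D_i=\mathrm{circ}\big(0_{(a_i+1)},A_i,0_{(2b_i+1)},B_i,0_{(a_i)}\big)$ does \emph{not} satisfy $D_iD_i^*=2n_iI_{2p}$: complementarity kills the terms $N_{A_i}(j)+N_{B_i}(j)$, but $D_iD_i^*$ also contains cross-correlations between the $A_i$-block and the $B_i$-block at shifts comparable to the distance between them, and these are not controlled by the Golay property for any choice of gap lengths. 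This is exactly why the construction in Theorem~\ref{KTUPLESOD} attaches \emph{two} circulants $X_{\alpha\beta}$ and $Y_{\alpha\beta}$ to each Golay pair, with $(A,B)$ and $(-B,A)$ placed antipodally so that the intra-pair cross-correlations cancel between the two matrices; only the sum $X_{\alpha\beta}X_{\alpha\beta}^*+Y_{\alpha\beta}Y_{\alpha\beta}^*$ is a multiple of $I$. Correcting this gives $2N(p)$ disjoint blocks rather than $N(p)$, which is precisely where the exponent $2N(p)-1$ comes from.

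Second, setting $X=\sum_iD_i$ and trying to choose anticommuting units $\varepsilon_i,\delta_i$ so that all $\binom{k}{2}$ cross terms $D_iD_j^*+D_jD_i^*$ vanish simultaneously is not the mechanism that works, and you give no argument that such a simultaneous choice exists; over a noncommutative signed group circulant matrices need not even commute (see the remark following Theorem~\ref{induct}), so there is no reason a per-layer assignment of units forces $D_iD_j^*=-D_jD_i^*$ for all pairs at once. The actual proof combines the blocks two at a time: Lemma~\ref{ZS} replaces a pair of disjoint quasisymmetric circulants $A,B$ (with the entries of $B$ central in the current signed group) by a \emph{single} circulant $D$ over an extension $S'\ge S$ with $DD^*=AA^*+BB^*$ and the remrep degree doubled. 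Iterating $2N(p)-1$ times starting from $S_{\mathbb R}$ (Theorem~\ref{cortoreal}) leaves one full circulant matrix, so no cross terms ever arise, and yields a faithful remrep of degree exactly $2^{2N(p)-1}$, hence an embedding into $SP_{2^{2N(p)-1}}$. Your estimate of degree $O\big(c^{O(\log p)}\big)$ does not recover this bound, and $N(p)$ is not a constant-tracking device but the number of nonzero binary digits of $p$.
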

\begin{remark}
An SOD over the Quaternion signed group $S_Q$ is called a {\it Quaternion orthogonal design}, QOD. 
An SOD over the complex signed group $S_{\mathbb{C}}$ is called a {\it complex orthogonal design}, COD. 
An SOD over the trivial signed group $S_{\mathbb{R}}$ is called an {\it orthogonal design}, OD. 
\end{remark}
\begin{lemma}\label{SWnormal}
Every $SW(n, w)$ over a finite signed group is normal. 
\end{lemma}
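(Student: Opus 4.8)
The plan is to transfer the defining identity $WW^{*}=wI_{n}$ from the signed group ring $\mathbb{R}[S]$ into a genuine real matrix algebra, where invertibility arguments are available, and then transfer the conclusion back. Concretely, I would first produce a faithful remrep $\pi\colon S\to SP_{m}$ whose $\mathbb{R}$-linear extension to $\mathbb{R}[S]$ is injective: the left action of $S$ on $\mathbb{R}[S]$ (which has $\mathbb{R}$-dimension $m$, the order of the signed group $S$) by left multiplication does the job, since each $s\in S$ permutes the chosen transversal $P$ up to signs and is thus represented by a monomial $\{0,\pm1\}$-matrix, while evaluating $\pi(x)$ on the basis vector corresponding to $1_{S}$ returns $x$, which forces the linear extension to be injective. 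The two features I will use are that $\pi$ is a unital $\mathbb{R}$-algebra homomorphism and that it intertwines conjugation with transposition, i.e. $\pi(\overline{x})=\pi(x)^{t}$; the latter holds because monomial $\{0,\pm1\}$-matrices are orthogonal and $\pi(s^{-1})=\pi(s)^{-1}$, so $\pi(\overline{s})=\pi(s^{-1})=\pi(s)^{-1}=\pi(s)^{t}$, and one extends by linearity.

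Next I would apply $\pi$ entrywise to obtain a unital, injective $\mathbb{R}$-algebra homomorphism $\Pi\colon M_{n}(\mathbb{R}[S])\to M_{nm}(\mathbb{R})$ satisfying $\Pi(A^{*})=\Pi(A)^{t}$; this last identity is a one-line block computation from $(A^{*})_{ij}=\overline{A_{ji}}$ together with $\pi(\overline{A_{ji}})=\pi(A_{ji})^{t}$. Applying $\Pi$ to $WW^{*}=wI_{n}$ gives the real-matrix identity $\Pi(W)\Pi(W)^{t}=w\,I_{nm}$. Since $w$ is a positive integer, $\Pi(W)$ is an invertible real matrix with inverse $w^{-1}\Pi(W)^{t}$, hence also $\Pi(W)^{t}\Pi(W)=w\,I_{nm}$. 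Rewriting the left-hand side as $\Pi(W^{*}W)$ and the right-hand side as $\Pi(wI_{n})$, injectivity of $\Pi$ yields $W^{*}W=wI_{n}=WW^{*}$, so $W$ is normal.

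I do not expect a genuine obstacle here; the only point demanding care is the construction of a faithful remrep of an arbitrary finite signed group together with the verification that its linear extension is injective and compatible with conjugation — this is precisely what licenses the "cancellation of $W$" even though $\mathbb{R}[S]$ is neither commutative nor a domain. (Alternatively, one can sidestep $\pi$ altogether: $M_{n}(\mathbb{R}[S])$ is a finite-dimensional $\mathbb{R}$-algebra and hence Dedekind-finite, so the right inverse $w^{-1}W^{*}$ of $W$ is automatically a two-sided inverse, which again gives $W^{*}W=wI_{n}$. I would nonetheless favour the representation-theoretic argument as the main proof, since remreps are already at hand, and merely record this shortcut.)
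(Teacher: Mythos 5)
Your proof is correct, but it takes a genuinely different route from the paper's. The paper never leaves $M_n(\mathbb{R}[S])$: it uses the finite $\mathbb{R}$-dimension $mn^2$ of that algebra to produce a dependence $c_1W+c_2W^2+\cdots+c_uW^u=0$ with $c_u\neq 0$, multiplies on the right by $(W^*)^{u-1}$, and uses $WW^*=wI_n$ repeatedly to collapse each term $W^j(W^*)^{u-1}$ to $w^j(W^*)^{u-1-j}$; solving for $W$ exhibits it as a polynomial in $W^*$, and anything commutes with polynomials in itself, whence $WW^*=W^*W$. Your main argument instead transports the identity into $M_{nm}(\mathbb{R})$ via the regular remrep and invokes the fact that a square real matrix with a right inverse is invertible; all the verifications you flag (faithfulness of the left regular representation, $\pi(\overline{s})=\pi(s)^t$, injectivity of the linear extension via evaluation at the basis vector of $1_S$) do go through, granted one fixes a transversal $P$ containing $1_S$. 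What each buys: the paper's argument is shorter and entirely self-contained, needing no representation and in fact proving the slightly stronger structural fact that $W$ is a polynomial in $W^*$; yours isolates the reason the cancellation is legitimate (a faithful finite-dimensional $*$-compatible representation exists for every finite signed group) and reduces the lemma to ordinary linear algebra. Your parenthetical shortcut via Dedekind-finiteness of finite-dimensional algebras is essentially the paper's proof in abstract form -- the explicit polynomial manipulation in the paper is precisely a hands-on proof of that Dedekind-finiteness for this pair of elements.
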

\begin{proof}\smartqed
Suppose that $WW^*=wI_n,$ where the entries in $W$ belong to a signed group $S$ of order $m.$ We show that $WW^*=W^*W$. The space of
all square matrices of order $n$ with entries in $\mathbb{R}[S]$ has the standard basis with $mn^2$ elements over the field $\mathbb{R}$. Thus, there exists an integer $u$ such that $$c_1W+c_2W^2+\cdots+ c_uW^u=0,$$ where $c_u\neq 0$, and $c_i\in \mathbb{R} \ (1\le i\le u).$ Multiplying the above equality from the right by $(W^*)^{u-1},$ $$c_1w(W^*)^{u-2}+c_2w^2(W^*)^{u-3}+\cdots +c_uw^{u-1}W=0.$$  Hence $W$ is a polynomial in $W^*$, and so $WW^*=W^*W.$ \qed
\end{proof}
%%%%%%%%%%%%%%%%%%%%%%%%%%%%%%%%%%%%%%%%%%%%%%%%%%%%%%%%
\begin{theorem}\label{SOD}
 A necessary and sufficient condition that there is a $SOD\big(n; \ u_1, \ldots, u_k\big)$ over a signed group $S,$ is that there exists a family 
$\{A_1, \ldots, A_k\}$ of pairwise disjoint square matrices of order $n$ with  entries from $\{0, S\}$ satisfying
\begin{align}
&A_iA_i^*=u_iI_n, \ \ \ \ \ \ \ \ \ \ \ \ \ \ \ \ \ 1\leq i \leq k, \label{necessary1} \\
&A_iA_j^*=-A_jA_i^*, \ \ \ \ \ \ \ \ \ \ \ \ 1\leq i\neq j \leq k. \label{necessary2}
\end{align}
\end{theorem}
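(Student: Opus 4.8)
The plan is to prove both directions by writing the SOD $X$ in terms of its "variable layers."

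For the forward direction, suppose $X = SOD(n; u_1,\ldots,u_k)$ over $S$, so its entries lie in $\{0,\epsilon_1 x_1,\ldots,\epsilon_k x_k\}$. For each $i$, I would define $A_i$ to be the matrix obtained from $X$ by keeping only those entries that are scalar multiples of $x_i$ (i.e., entries of the form $\epsilon x_i$ with $\epsilon \in S$), replacing that entry by $\epsilon \in S$, and setting every other entry to $0$. Then $A_i$ has entries from $\{0,S\}$, the $A_i$ are pairwise disjoint by construction (each position of $X$ contributes to at most one layer), and $X = \sum_{i=1}^k x_i A_i$. Since the $x_i$ are independent commuting variables, $X^* = \sum_i x_i A_i^*$, and expanding $XX^* = \sum_{i}\sum_j x_i x_j A_i A_j^*$ and comparing with $\big(\sum_i u_i x_i^2\big) I_n$ coefficient-by-coefficient in the monomials $x_i^2$ and $x_i x_j$ gives exactly \eqref{necessary1} and \eqref{necessary2} (the latter from symmetrizing the $x_i x_j$ and $x_j x_i$ terms, which together must vanish).

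For the converse, given a disjoint family $\{A_1,\ldots,A_k\}$ with entries from $\{0,S\}$ satisfying \eqref{necessary1}–\eqref{necessary2}, set $X := \sum_{i=1}^k x_i A_i$. Disjointness guarantees that in each position at most one $A_i$ is nonzero, so each entry of $X$ is either $0$ or of the form $\epsilon x_i$ with $\epsilon\in S$; hence $X$ has the correct entry shape to be an SOD of type $(u_1,\ldots,u_k)$. Then $XX^* = \sum_{i,j} x_i x_j A_i A_j^*$; the diagonal terms $i=j$ contribute $\sum_i x_i^2 u_i I_n$ by \eqref{necessary1}, while for $i\ne j$ the pair $x_i x_j A_i A_j^* + x_j x_i A_j A_i^* = x_i x_j (A_i A_j^* + A_j A_i^*) = 0$ by \eqref{necessary2}, using that the scalar variables commute. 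Therefore $XX^* = \big(\sum_i u_i x_i^2\big) I_n$, as required.

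The only genuinely delicate point is bookkeeping about what "entries from $\{0,S\}$" and "disjoint" mean together with the identification $-rs = r(-s)$ in the signed group ring: I need the decomposition $X = \sum x_i A_i$ to be well-defined and reversible, i.e., that recording the coefficient $\epsilon\in S$ of $x_i$ in a given position is unambiguous. This is immediate once one fixes a set $P$ of coset representatives of $S$ modulo $\langle -1\rangle$ as in the setup, writing each nonzero entry uniquely as $\pm s\, x_i$ with $s\in P$; I will note this briefly rather than belabor it. Everything else is the routine coefficient comparison described above, and the commutativity of the variables $x_i$ with everything in $\mathbb{R}[S]$ is what makes the cross-term cancellation work.
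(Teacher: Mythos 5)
Your proposal is correct and takes essentially the same route as the paper: decompose $X=\sum_i x_iA_i$ into its disjoint variable layers, expand $XX^*$, and extract \eqref{necessary1} and \eqref{necessary2} from the polynomial identity (the paper specializes the variables to $0$ and $1$ where you compare coefficients of $x_i^2$ and $x_ix_j$, which is equivalent), with the converse obtained by reversing the expansion. No gaps.
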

\begin{proof}\smartqed
 Suppose that there is a $A=SOD\big(n; \ u_1, \ldots, u_k\big)$ over a signed group $S.$ One can write 
\begin{eqnarray} \label{x}
 A=\sum_{m=1}^kx_mA_m, 
\end{eqnarray}
 where the $A_i$'s are
square matrices of order $n$ with  entries from $\{0, S\}.$ Since the entries in $A$ are linear monomials in the $x_i,$ the $A_i$'s are disjoint. Since $A$ is an SOD,
\begin{align}\label{AA}
\displaystyle AA^*=\Bigg(\sum_{i=1}^ku_ix_i^2\Bigg)I_n,
\end{align}
and so by using
 \eqref{x},  
\begin{align}\label{Aj}
\sum_{m=1}^{k}x_m^2A_mA_m^*+\sum_{i=1}^k\sum_{j=i+1}^k x_ix_j(A_iA_j^*+A_jA_i^*)=\Bigg(\sum_{i=1}^ku_ix_i^2\Bigg)I_n.
\end{align}
In the above equality, for each $1\leq i \leq k,$ let $x_i=1$ and $x_{j}=0$ for all $1\le j\le k$ and $j\neq i,$  to get  \eqref{necessary1} and therefore  \eqref{necessary2}. 

On the other hand, if $\{A_1, \ldots, A_k\}$ are pairwise disjoint square matrices of order $n$ with  entries from $\{0, S\}$ which satisfy  \eqref{necessary1} and  \eqref{necessary2}, then the left hand side of the equality  \eqref{Aj} gives us  \eqref{AA}.\qed
\end{proof}
\begin{remark}
Equation \eqref{AA} implies equations \eqref{necessary1} and \eqref{necessary2}. 
Multiply  \eqref{necessary2} from the left by $A_i^*$ and then from the right by $A_i$ to get $A_j^*A_i=-A_i^*A_j$ for $1\leq i\neq j \leq k.$ Therefore,
by Lemma \ref{SWnormal},
\begin{align*}
A^*A=\sum_{m=1}^{k}x_m^2A_m^*A_m+\sum_{i=1}^k\sum_{j=i+1}^k x_ix_j(A_i^*A_j+A_j^*A_i)=\Bigg(\sum_{i=1}^ku_ix_i^2\Bigg)I_n.
\end{align*}
Thus, $AA^*=A^*A.$
 It means that every SOD over a finite signed group is normal.
\end{remark}
%%%%%%%%%%%%%%%%%%%%%%%%%%%%%%%%%%%%%%%%%%%%%%%%%%%%%%
\begin{lemma}\label{musteven}
There does not exist any full SOD of order $n>1,$ if $n$ is odd. 
\end{lemma}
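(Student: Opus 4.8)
The plan is to reduce the statement to the corresponding fact for signed group Hadamard matrices and then run the classical ``two rows'' parity argument inside the signed group ring $\mathbb{R}[S]$. Suppose, for contradiction, that $X$ is a full $SOD(n;\,u_1,\ldots,u_k)$ over a signed group $S$ with $n$ odd and $n>1$. Equating all the variables $x_i$ to $1$ replaces each entry $\epsilon_\ell x_\ell$ of $X$ by the group element $\epsilon_\ell\in S$, so, as already observed in the text, we obtain a matrix $H$ with all entries in $S$ and $HH^{*}=nI_n$; that is, an $SH(n,S)$. Since $n>1$, fix two distinct rows, say rows $r$ and $s$.

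The off-diagonal relation in $HH^{*}=nI_n$ gives $\sum_{j=1}^{n}H_{rj}\overline{H_{sj}}=\sum_{j=1}^{n}H_{rj}H_{sj}^{-1}=0$ in $\mathbb{R}[S]$, where I used that $\overline{g}=g^{-1}$ for $g\in S$. Fix a set $P$ of coset representatives of $S$ modulo $\langle -1\rangle$, so that $S=P\,\sqcup\,(-P)$ and, by the definition of the signed group ring, the elements of $P$ form an $\mathbb{R}$-basis of $\mathbb{R}[S]$. Each term $H_{rj}H_{sj}^{-1}$ lies in $S$, hence equals $\pm p$ for a unique $p\in P$. For $p\in P$, let $a_p$ be the number of indices $j$ with $H_{rj}H_{sj}^{-1}=p$ and $b_p$ the number with $H_{rj}H_{sj}^{-1}=-p$. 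Reading off the coefficient of each $p$ in the vanishing sum forces $a_p-b_p=0$ for every $p\in P$, and therefore $n=\sum_{p\in P}(a_p+b_p)=2\sum_{p\in P}a_p$ is even, contradicting the hypothesis. Hence no such $X$ exists.

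The argument is short, and the only step deserving a moment of care is the linear-independence claim used to read off coefficients: distinct coset representatives really are $\mathbb{R}$-independent in $\mathbb{R}[S]$, since the various group relations (such as $jk=-(kj)$ in $S_Q$) are absorbed entirely by the sign identification $-rs=r(-s)$ rather than collapsing the basis. I would also note that the case $n=1$ is correctly excluded: with a single row there is no pair of distinct rows to compare, and indeed $[\epsilon x]$ with $\epsilon\overline{\epsilon}=1$ is a full SOD of order $1$. If one prefers not to specialize the variables, the same counting can be carried out directly on $XX^{*}=\big(\sum_i u_ix_i^2\big)I_n$ by grouping the off-diagonal terms according to the quadratic monomial $x_ax_b$ they carry, but passing to $SH(n,S)$ is cleaner. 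This is precisely the signed group analogue of the elementary part of the statement that a Hadamard matrix has order $1$, $2$, or a multiple of $4$; only the parity conclusion is needed here.
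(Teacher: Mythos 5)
Your proof is correct and follows essentially the same route as the paper: reduce to an $SH(n,S)$ by setting all variables to $1$, then use orthogonality of two rows and the $\mathbb{R}$-linear independence of coset representatives in $\mathbb{R}[S]$ to show each element $s$ is matched by $-s$, forcing $n$ even. The only cosmetic difference is that the paper first normalizes the first row to all $1$'s, whereas you count the products $H_{rj}H_{sj}^{-1}$ directly; these are the same parity argument.
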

\begin{proof}\smartqed
Assume that there is a full SOD of order $n>1$ over a signed group $S$.  Equating all variables to 1 in the SOD, one obtains a $SH(n,S)=[h_{ij}]_{i,j=1}^n.$ One may multiply each column of the $SH(n,S),$ from the right, by the inverse of corresponding entry of its first row, $\overline{h}_{1j},$ to get an equivalent $SH(n,S)$ with the first row all 1 (see  \cite{Craigen, CHK} for the definition of equivalence). By orthogonality of the rows of the $SH(n,S)$, the number of occurrences of a given element $s\in S$ in each subsequent row must be equal to the number of occurrences of $-s$. Therefore, $n$ has to be even.\qed
\end{proof}

%%%%%%%%%%%%%%%%%%%%%%%%%%%%%%%%%%%%%%%%%%%%%%%%%%%%%%%
\section{Some applications of signed group orthogonal designs }
%%%%%%%%%%%%%%%%%%%%%%%%%%%%%%%%%%%%%%%%%%%%%%%%%%%%%%%%%%%
In this section, we adapt the methods of Livinskyi \cite{Ivan} to obtain generalizations and improvements of his results about Hadamard matrices in the much more general setting of ODs.

Suppose that we have a remrep $\pi: S\rightarrow SP_m.$ We extend this remrep to a ring homomorphism $\pi: \mathbb{R}[S]\rightarrow M_m[\mathbb{R}]$ linearly by $\pi\big(r_1s_1+\cdots + r_ns_n\big)=r_1\pi\big(s_1\big)+\cdots+r_n\pi\big(s_n\big).$ Since for every matrix $A\in SP_m$ we have $A^{-1}=A^t$, for every $s\in S$, $\pi\big(\overline{s}\big)=\pi(s)^{-1}=\pi(s)^t.$
%In the following theorem, we show how one can obtain  OD's from SOD's by using remreps. 

Next theorem shows how one can obtain ODs from SODs.
\begin{theorem}\label{sodtood}
Suppose that there exists a $SOD\big(n; \ u_1, \ldots, u_k\big)$ for some signed group $S$ equipped with a remrep $\pi$ of degree $m,$ where $m$ is the order of a Hadamard matrix. Then there is an $OD\big(mn; \ mu_1, \dots, mu_k\big).$  
\end{theorem}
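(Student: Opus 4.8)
The plan is to start with a $SOD\big(n;\ u_1,\ldots,u_k\big)$ over $S$, write it as $A=\sum_{m=1}^k x_m A_m$ where the $A_m$ are pairwise disjoint $\{0,S\}$-matrices satisfying \eqref{necessary1} and \eqref{necessary2} (Theorem \ref{SOD}), and apply the extended ring homomorphism $\pi:\mathbb{R}[S]\to M_m[\mathbb{R}]$ entrywise. Concretely, I would form $B_m:=\big(\pi(a^{(m)}_{ij})\big)_{i,j}$, an $mn\times mn$ block matrix whose $(i,j)$ block is $\pi$ applied to the $(i,j)$ entry of $A_m$. Because $\pi$ lands in $SP_m$ (monomial $\{0,\pm1\}$-matrices) and the $A_m$ are $\{0,S\}$-matrices, each $B_m$ has entries in $\{0,\pm1\}$; because the $A_m$ are pairwise disjoint, so are the $B_m$. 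The key algebraic fact is that $\pi$ converts the signed-group adjoint into the ordinary transpose: since $\pi(\overline s)=\pi(s)^t$ for $s\in S$, one gets $\pi(A^*)$ block-equal to $\big(\pi(a_{ji})^t\big)=\big(\pi(\overline a_{ij})\big)^{\mathrm{block\text-}t}$, i.e. applying $\pi$ blockwise commutes with taking adjoints in the appropriate sense, so $B_m B_\ell^t$ is the blockwise $\pi$-image of $A_m A_\ell^*$. Hence \eqref{necessary1} and \eqref{necessary2} pass through: $B_mB_m^t=u_m I_{mn}$ and $B_mB_\ell^t=-B_\ell B_m^t$ for $m\neq\ell$.

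Now $C:=\sum_{m=1}^k x_m B_m$ satisfies $CC^t=\big(\sum u_m x_m^2\big)I_{mn}$, so it is an $OD$ of type $(u_1,\ldots,u_k)$ and order $mn$ — but with too small a weight and possibly with zero entries. To boost each $u_m$ to $mu_m$ and fill in the zeros, I would tensor with a Hadamard matrix $H$ of order $m$: replace each $B_m$ by $B_m$ with its zero blocks and its $\pm1$ monomial blocks suitably combined with $H$. The cleanest route is to use the standard trick for "inflating" an orthogonal-type identity: if $H H^t=mI_m$, then for a single $\{0,S\}$-matrix one checks that the Kronecker-type construction $\widetilde B_m$ built from $B_m$ and $H$ satisfies $\widetilde B_m\widetilde B_m^t=m\,(B_mB_m^t)$ while preserving disjointness and the anti-commuting relations, and — crucially — has no zero entries when the original block was zero, because the zero blocks get replaced by blocks of $H$ (or $-H$) rather than by zero. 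Then $\widetilde C:=\sum_m x_m\widetilde B_m$ is the desired full $OD\big(mn;\ mu_1,\ldots,mu_k\big)$.

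I expect the main obstacle to be making the "inflation by $H$" step do three things simultaneously: (a) multiply every weight by exactly $m$, (b) keep the $k$ component matrices disjoint and preserve the anti-symmetric products \eqref{necessary2}, and (c) eliminate all zero entries. One natural implementation is to note that $A$ itself already is a block matrix with $n\times n$ blocks of size $m$ (after applying $\pi$); one then replaces the all-zero $m\times m$ blocks by $\pm H$ and the monomial $\pm\pi(s)$ blocks by $\pm H\pi(s)$ or similar, choosing signs/variables so that the cross terms still cancel. Verifying that this particular substitution respects \eqref{necessary1} and \eqref{necessary2} — i.e. that the "filler" contributions in the zero positions and the "genuine" contributions recombine to give exactly $\big(\sum mu_i x_i^2\big)I_{mn}$ — is the delicate bookkeeping, and is where I would spend the bulk of the argument. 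Everything else (linearity of $\pi$, $\pi(\overline s)=\pi(s)^t$, disjointness) is routine and follows directly from the setup established earlier in the paper.
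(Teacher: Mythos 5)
Your first half is exactly the paper's argument: decompose the SOD via Theorem \ref{SOD} into disjoint $\{0,S\}$-matrices $A_1,\ldots,A_k$ satisfying \eqref{necessary1} and \eqref{necessary2}, apply $\pi$ blockwise, and use $\pi(\overline{s})=\pi(s)^{t}$ to convert the signed-group adjoint into the ordinary transpose, so that $B_\alpha B_\beta^{t}$ is the blockwise image of $A_\alpha A_\beta^{*}$. Up to that point there is nothing to object to.

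The gap is in your ``inflation by $H$'' step, and it comes from misreading the goal: the theorem does not assert that the resulting $OD\big(mn;\ mu_1,\ldots,mu_k\big)$ is \emph{full}, so there are no zeros to fill in. The paper's construction is simply to take the $(i,j)$ block of $B_\alpha$ to be $\pi\big(A_\alpha[i,j]\big)H$, i.e.\ to right-multiply your blockwise image by $I_n\otimes H$; the zero blocks of $A_\alpha$ stay zero, $HH^{t}=mI_m$ supplies the factor $m$ in every product, and disjointness together with the relations $B_\alpha B_\alpha^{t}=mu_\alpha I_{mn}$ and $B_\alpha B_\beta^{t}=-B_\beta B_\alpha^{t}$ follow with no further bookkeeping. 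Your proposed variant --- replacing the zero $m\times m$ blocks by $\pm H$ --- would actually break the construction: the $B_\alpha$ would no longer be pairwise disjoint, and each row of $B_\alpha$ would acquire $mn$ nonzero entries instead of $mu_\alpha$, so $B_\alpha B_\alpha^{t}$ could not equal $mu_\alpha I_{mn}$ for any choice of signs. The ``delicate bookkeeping'' you defer to is therefore not merely hard but impossible as posed; the correct resolution is that it is unnecessary. (Fullness of the output is obtained elsewhere in the paper only when the input SOD is itself full, in which case $\sum_i u_i=n$ and the zero blocks never arise.)
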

\begin{proof}\smartqed
Suppose that there exists a $SOD\big(n; \ u_1, \ldots, u_k\big)$ for some signed group $S.$ By Theorem \ref{SOD}, there are pairwise disjoint matrices $A_1, \ldots, A_k$ of order $n$ with entries in $\{0, S\}$ such that 
\begin{align}
& \ A_{\alpha} A_{\alpha}^*= u_{\alpha}I_n, \ \ \ \ \ \ \ \ \ \ \ \ 1\le \alpha\le k, \label {con1}\\
& \  A_{\alpha} A_{\beta}^*=-A_{\beta} A_{\alpha}^*, \ \ \ \ \ \ \ 1\le \alpha\neq \beta \le k. \label {con2}
\end{align}
\noindent Let $\pi: \ S\rightarrow SP_m$ be a remrep of degree $m,$ and $H$ be a Hadamard matrix of degree $m.$ Also, for each $1\le \alpha\le k,$ let
$$B_{\alpha}=\Big[\pi\big(A_{\alpha}[i,j]\big)H\Big]_{i.j=1}^n.$$  By Proposition $1.1$ in \cite{GS}, it is sufficient to show that $B_{\alpha}$'s are pairwise disjoint matrices of order $mn,$ with $\{0,\pm 1\}$ entries such that 
\begin{align}
&B_{\alpha} B_{\alpha}^t= mu_{\alpha}I_{mn}, \ \ \ \ \ \ \ \ \ \ \ \ 1\le \alpha\le k,  \label {con21} \\
&B_{\alpha} B_{\beta}^t=-B_{\beta} B_{\alpha}^t, \ \ \ \ \ \ \ \ \ \ \ \ 1\le \alpha\neq \beta \le k. \label {con22}
\end{align}
Since $A_{\alpha}$'s are pairwise disjoint, so are $B_{\alpha}$'s (see \cite[chap. 1]{GS} for Hurwitz-Radon matrices and their properties). Let $1\le \alpha\neq \beta \le k$ and $1\le i,j \le n.$ Then
\begin{align}
\big(B_{\alpha} B_{\beta}^t\big)[i,j]&=\sum_{k=1}^n\pi\big(A_{\alpha}[i,k]\big)HH^t\pi\big(A_{\beta}[j,k]\big)^t\nonumber \\ &=m\sum_{k=1}^n\pi\big(A_{\alpha}[i,k]\big)\pi\big(\overline{A}_{\beta}[j,k]\big)\nonumber\\ &=m \pi\Big(\sum_{k=1}^n A_{\alpha}[i,k]\overline{A}_{\beta}[j,k]\Big) \nonumber\\ &=m\pi\Big(\big(A_{\alpha}A_{\beta}^*\big)[i,j]\Big)\label{left1}\\ &=m\pi\Big(\big(-A_{\beta}A_{\alpha}^*\big)[i,j]\Big) \ \ \ \ \ {\rm from \ \   \eqref{con2}}\nonumber \\ &=-m\pi\Big(\big(A_{\beta}A_{\alpha}^*\big)[i,j]\Big) \label {left2}
\end{align}
On the other hand, similarly, 
\begin{align}
\big(B_{\beta} B_{\alpha}^t\big)[i,j]&=\sum_{k=1}^n\pi\big(A_{\beta}[i,k]\big)HH^t\pi\big(A_{\alpha}[j,k]\big)^t\nonumber \\ &=m\sum_{k=1}^n\pi\big(A_{\beta}[i,k]\big)\pi\big(\overline{A}_{\alpha}[j,k]\big)\nonumber \\ &=m \pi\Big(\sum_{k=1}^n A_{\beta}[i,k]\overline{A}_{\alpha}[j,k]\Big) \nonumber \\ &=m\pi\Big(\big(A_{\beta}A_{\alpha}^*\big)[i,j]\Big) \label {right1}.
\end{align}
Comparing  \eqref{left2} and \eqref{right1}, one obtains  \eqref{con22}. If  $\alpha=\beta$ in  \eqref{left1}, then for $1\le i,j \le n,$
\begin{align*}
\big(B_{\alpha} B_{\alpha}^t\big)[i,j]&=m\pi\Big(\big(A_{\alpha}A_{\alpha}^*\big)[i,j]\Big) \\ &= m\pi\big(\gamma_{ij}u_{\alpha}\cdot 1_S\big)\ \ \ \ \ {\rm from \ \   \eqref{con1}}\\ &=m\gamma_{ij}u_{\alpha}I_m,
\end{align*}
where $\gamma_{ij}=1$ if $i=j,$ and 0 otherwise. Whence  \eqref{con21} follows. \qed
\end{proof}

In the following two corollaries, it is shown how to obtain ODs from CODs and QODs.
\begin{corollary}
If there exists a $COD\big(n; \ u_1, \ldots, u_k\big),$ then there exists an $OD\big(2n; \ 2u_1, \ldots, 2u_k\big)$.
\end{corollary}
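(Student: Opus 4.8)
The plan is to specialize Theorem~\ref{sodtood} to the complex signed group $S_{\mathbb{C}}=\{\pm 1,\pm i\}$. The key observation is that $S_{\mathbb{C}}$ admits a faithful remrep of degree $2$: send $1\mapsto I_2$ and $i\mapsto R=\left[\begin{array}{cc}0&1\\-&0\end{array}\right]$, and then $-1\mapsto -I_2$ and $-i\mapsto -R$. One checks immediately that $R$ is a monomial $\{0,\pm 1\}$-matrix with $R^t=R^{-1}=-R$, so this map is indeed a signed group homomorphism $\pi:S_{\mathbb{C}}\rightarrow SP_2$, and it is clearly injective.

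Next I would note that the degree $m=2$ is the order of a Hadamard matrix, namely $H=\left[\begin{array}{cc}1&1\\1&-\end{array}\right]$ satisfies $HH^t=2I_2$. Thus all the hypotheses of Theorem~\ref{sodtood} are met with $S=S_{\mathbb{C}}$, $m=2$, and $n$ as given. Applying that theorem directly, from a $COD\big(n;\ u_1,\ldots,u_k\big)$ — which is by definition an SOD over $S_{\mathbb{C}}$ — we obtain an $OD\big(2n;\ 2u_1,\ldots,2u_k\big)$, which is exactly the claimed conclusion.

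There is essentially no obstacle here beyond exhibiting the remrep and the small Hadamard matrix; the corollary is a one-line consequence of the preceding theorem once one recognizes that $S_{\mathbb{C}}$ is realized faithfully in $SP_2$ and that $2$ is a Hadamard order. The only point requiring a moment's care is verifying that $\pi$ respects conjugation in the sense used in the proof of Theorem~\ref{sodtood}, i.e.\ that $\pi(\overline{s})=\pi(s)^t$ for all $s\in S_{\mathbb{C}}$; this is automatic for a remrep (as remarked in the paragraph preceding Theorem~\ref{sodtood}), but concretely it amounts to $\pi(\overline{i})=\pi(-i)=-R=R^t=\pi(i)^t$, which holds. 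Hence the corollary follows.
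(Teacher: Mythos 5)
Your proposal is correct and follows essentially the same route as the paper: view the $COD$ as an $SOD$ over $S_{\mathbb{C}}$, exhibit the degree-$2$ remrep $i\mapsto R$, and apply Theorem~\ref{sodtood} with the Hadamard matrix of order $2$. The extra verifications you include (that $R^t=-R=\pi(\overline{i})$ and that $2$ is a Hadamard order) are sound and merely make explicit what the paper leaves to the reader.
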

\begin{proof}\smartqed
A $COD\big(n; \ u_1, \ldots, u_k\big)$ can be viewed as a $SOD\big(n; \ u_1, \ldots, u_k\big)$ over the complex signed group $S_{\mathbb{C}}.$ It can be seen that $\pi: S_{\mathbb C}\rightarrow SP_2$ defined by $$i \longrightarrow R=\left[ \begin {array}{cc} 0&1\\ -&0\end {array} \right]$$ is a remrep of degree 2, and so by Theorem \ref{sodtood}, there exists an $OD\big(2n; \ 2u_1, \ldots, 2u_k\big).$ \qed
\end{proof}
\begin{corollary}
If there exists a $QOD\big(n; \ u_1, \ldots, u_k\big),$ then there exists an $OD\big(4n; \ 4u_1, \ldots, 4u_k\big).$
\end{corollary}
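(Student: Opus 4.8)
The plan is to follow the proof of the preceding corollary essentially verbatim, replacing the degree-$2$ remrep of the complex signed group by a degree-$4$ remrep of the Quaternion signed group $S_Q=\big\langle j,k;\ j^2=k^2=-1,\ jk=-kj\big\rangle$. First I would record that a $QOD\big(n;\ u_1,\ldots,u_k\big)$ is, by definition, nothing but a $SOD\big(n;\ u_1,\ldots,u_k\big)$ over $S_Q$, so Theorem \ref{SOD} already supplies pairwise disjoint matrices $A_1,\ldots,A_k$ of order $n$ with entries in $\{0,S_Q\}$ satisfying \eqref{necessary1}--\eqref{necessary2}; thus everything reduces to feeding $S_Q$ into Theorem \ref{sodtood}, for which we only need a remrep of $S_Q$ whose degree is the order of a Hadamard matrix.

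Next I would exhibit such a remrep explicitly. Using the matrices $Q$ and $R$ fixed in the Introduction and letting $\otimes$ denote the Kronecker product, I would set $\pi(j)=R\otimes I_2$ and $\pi(k)=Q\otimes R$, send $-1$ to $-I_4$, and extend multiplicatively; equivalently, $\pi$ is the representation of $S_Q$ by left multiplication on the quaternion algebra $\mathbb{R}\langle 1,j,k,jk\rangle$. Since $\pi(j)$ and $\pi(k)$ are monomial $\{0,\pm1\}$-matrices of order $4$, the image lies in $SP_4$, and the identities $R^2=-I_2$, $Q^2=I_2$, $RQ=-QR$ give at once $\pi(j)^2=R^2\otimes I_2=-I_4$, $\pi(k)^2=Q^2\otimes R^2=-I_4$, and $\pi(j)\pi(k)=(RQ)\otimes R=-(QR)\otimes R=-\pi(k)\pi(j)$, so the defining relations of $S_Q$ are respected and $\pi\colon S_Q\rightarrow SP_4$ is a well-defined (in fact faithful) remrep of degree $4$.

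Finally, since $4$ is the order of a Hadamard matrix, Theorem \ref{sodtood} applies with $m=4$ and $S=S_Q$ and yields an $OD\big(4n;\ 4u_1,\ldots,4u_k\big)$, as claimed. I do not expect any real obstacle: the only step carrying content is producing a degree-$4$ real monomial representation of the quaternion group, and verifying it amounts to checking the three identities $\pi(j)^2=-I_4$, $\pi(k)^2=-I_4$, $\pi(j)\pi(k)=-\pi(k)\pi(j)$, each a one-line Kronecker-product computation; the rest is a direct invocation of Theorem \ref{sodtood}.
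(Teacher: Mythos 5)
Your proposal is correct and follows essentially the same route as the paper: view the $QOD$ as an $SOD$ over $S_Q$, exhibit an explicit degree-$4$ remrep, and invoke Theorem \ref{sodtood} with $m=4$. The only (immaterial) difference is your choice $\pi(k)=Q\otimes R$ where the paper uses $\pi(k)=P\otimes R$; both satisfy the defining relations of $S_Q$, so either works.
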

\begin{proof}\smartqed
A $QOD\big(n; \ u_1, \ldots, u_k\big)$ can be viewed as a $SOD\big(n; \ u_1, \ldots, u_k\big)$ over the Quaternion signed group $S_Q.$ It can be seen that $\pi: S_Q\rightarrow SP_4$ defined by $$j \longrightarrow R\otimes I_2=\left[ \begin {array}{cccc} 0&0&1&0\\ 0&0&0&1 \\ -&0&0&0 \\ 0&-&0&0\end {array} \right] \ \ \ \ {\rm and}\ \ \ \ k\longrightarrow P\otimes R=\left[ \begin {array}{cccc} 0&0&0&1\\ 0&0&-&0 \\ 0&1&0&0 \\ -&0&0&0\end {array} \right]\!,$$
is a remrep of degree 4, and so by Theorem \ref{sodtood}, there exists an $OD\big(4n; \ 4u_1, \ldots, 4u_k\big).$\qed
\end{proof}

 %%%%%%%%%%%%%%%%%%%%%%%%%%%%%%%%%%%%
Following similar techniques in $\cite{Craigen, CHK123, Ivan}$, we have the following Lemma.
\begin{lemma}\label{ZS}
Suppose that $A$ and $B$ are two disjoint  circulant matrices of order $d$ with entries from
$\big\{0, \epsilon_1x_1, \ldots, \epsilon_kx_k\big\},$ where the $x_{\ell}$'s are variables, $\epsilon_{\ell}\in S \ (1\le \ell \le k)$ for $A$ and $\epsilon_{\ell}\in Z(S)$, the center of $S$, $(1\le \ell \le k)$ for $B.$ Also, assume $A$ is normal. If $$C= \left[ \begin {array}{cc} A+B&A-B\\ A^*-B^*&-A^*-B^*\end {array} \right]\!,$$ then $CC^*=C^*C=2I_2\otimes (AA^*+BB^*).$ Moreover, if $A$ and $B$ are both quasisymmetric and $S$ has a faithful remrep of degree $m$, then there exists a  circulant quasisymmetric normal matrix $D$ of order $d$ with entries from $\big\{0, \epsilon_1' x_1, \ldots, \epsilon_k' x_k\big\}$ and the same support as $A+B$ such that 
$DD^*=AA^*+BB^*,$ where $\epsilon'_{\ell}\in S' \ (1\le \ell \le k),$ and $S' \ge S$ is a signed group having a faithful remrep of degree $2m$.  
\end{lemma}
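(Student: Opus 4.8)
My plan is to obtain the identity $CC^{*}=C^{*}C=2I_2\otimes(AA^{*}+BB^{*})$ by a direct blockwise expansion, and to produce $D$ by folding $C$ into a block-circulant matrix of order $d$ and recognizing its $2\times2$ blocks, after a right multiplication by the order-$2$ Hadamard matrix, as single monomials over a suitable enlargement $S'$ of $S$.

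For the identity I would first collect the ingredients. $A$ is normal by hypothesis; since $B$ is circulant with entries in $Z(S)$, it commutes with $B^{*}$ (so $B$ is normal) and with $A$ and $A^{*}$, and $B^{*}$ likewise commutes with $A$ and $A^{*}$ --- in each case because two circulant matrices commute as soon as their entries commute pairwise. Expanding $CC^{*}$ in $2\times2$ block form, the diagonal blocks come out as $2AA^{*}+2BB^{*}$ and $2A^{*}A+2B^{*}B$, which agree by normality of $A$ and $B$, while the off-diagonal blocks are $2(BA-AB)$ and $2(A^{*}B^{*}-B^{*}A^{*})$, which vanish by the commuting relations; the computation of $C^{*}C$ is identical. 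In particular $CC^{*}=C^{*}C$, a fact I will reuse after reshuffling $C$.

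For the ``moreover'' part, let $S'$ be the signed group of all $2\times2$ monomial matrices whose nonzero entries lie in $S$, with distinguished element $-I_2$; then $s\mapsto{\rm diag}(s,s)$ realizes $S$ as a signed subgroup of $S'$, and, given a faithful remrep $\pi\colon S\to SP_m$, the assignments ${\rm diag}(s_1,s_2)\mapsto{\rm diag}(\pi(s_1),\pi(s_2))$ and $\left[\begin{smallmatrix}0&s_1\\s_2&0\end{smallmatrix}\right]\mapsto\left[\begin{smallmatrix}0&\pi(s_1)\\\pi(s_2)&0\end{smallmatrix}\right]$ define a faithful remrep $S'\to SP_{2m}$. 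After the evident permutation similarity (which turns a $2\times2$ array of $d\times d$ circulants into a $d\times d$ array of $2\times2$ blocks), $C$ becomes a block-circulant matrix $\widehat C={\rm circ}_d(\Gamma_0,\dots,\Gamma_{d-1})$ with $2\times2$ blocks. Writing $A$ and $B$ as circulants, quasisymmetry of $A$ (resp.\ $B$) forces the variable occurring in position $t$ to coincide with that in position $d-t$; using this together with disjointness and the symmetry of the supports of $A$ and $B$, each block factors as $\Gamma_t=M_tH_2$, where $H_2=P+Q=\left[\begin{array}{cc}1&1\\1&-1\end{array}\right]$ and $M_t$ is either $0$ or $x_\ell$ times a single element $\delta_t\in S'$ --- a diagonal element formed from the $A$-entries, or an anti-diagonal one formed from the $B$-entries. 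Consequently $\widehat C=\widehat D\,(I_d\otimes H_2)$, where $\widehat D={\rm circ}_d(M_0,\dots,M_{d-1})$ is nothing but the circulant $D:={\rm circ}_d(\delta_0x_{\bullet_0},\dots,\delta_{d-1}x_{\bullet_{d-1}})$ of order $d$ over $S'$, with its $S'$-entries expanded into $2\times2$ blocks over $\mathbb{R}[S]$. By construction $D$ has the same support as $A+B$, and the variable-pattern symmetry just used shows that $D$ is quasisymmetric.

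It then remains to transfer the norm identity. Since $H_2H_2^{*}=H_2^{2}=2I_2$, the relation $\widehat C=\widehat D(I_d\otimes H_2)$ gives $\widehat C\widehat C^{*}=2\widehat D\widehat D^{*}$, while the permuted form of the first identity reads $\widehat C\widehat C^{*}=2\big((AA^{*}+BB^{*})\otimes I_2\big)$; hence $\widehat D\widehat D^{*}=(AA^{*}+BB^{*})\otimes I_2$, that is, $DD^{*}=AA^{*}+BB^{*}$ upon reading the blocks back. Moreover $\widehat C^{*}\widehat C=(I_d\otimes H_2)\widehat D^{*}\widehat D(I_d\otimes H_2)$, and combining this with $\widehat C^{*}\widehat C=\widehat C\widehat C^{*}=2\widehat D\widehat D^{*}$ and the value of $\widehat D\widehat D^{*}$ just found yields $\widehat D^{*}\widehat D=\widehat D\widehat D^{*}$, so $D$ is normal. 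I expect the factorization $\Gamma_t=M_tH_2$ to be the main obstacle: one has to see that right multiplication by $H_2^{-1}$ is exactly the operation that collapses the contributions of $A,A^{*},B,B^{*}$ inside each block into one monomial, and that quasisymmetry and disjointness are precisely the hypotheses making the result a scalar times a single element of $S'$. (A naive attempt such as $D=\mathbf pA+\mathbf qB$ over a central product $S\circ\langle\mathbf p,\mathbf q\rangle$ fails, leaving the uncancellable term $\mathbf{pq}(AB^{*}-BA^{*})$ in $DD^{*}$; this is why the larger signed group $S'$ --- and with it the genuine doubling of the remrep degree --- is needed.)
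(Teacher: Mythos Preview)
Your proposal is correct and follows essentially the same route as the paper: a direct block computation for $CC^{*}=C^{*}C$, then the perfect-shuffle similarity turning $C$ into a $d\times d$ block-circulant with $2\times2$ blocks, right multiplication by $I_d\otimes H_2^{-1}$ collapsing each nonzero block to a monomial $2\times2$ matrix over $S$, and reading those monomials as elements of an enlarged signed group $S'$ with the obvious entrywise remrep of degree $2m$. Your treatment is slightly more explicit in places (you spell out why the off-diagonal blocks of $CC^{*}$ vanish and how normality of $D$ follows from that of $C$), and you take $S'$ to be all $2\times2$ $S$-monomial matrices rather than just the subgroup generated by the blocks that actually occur, but these are cosmetic differences.
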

\begin{proof}\smartqed
It may be verified directly that $CC^*=C^*C=2I_2\otimes (AA^*+BB^*).$ To find matrix $D,$ first reorder the rows and columns of $C$ to get matrix $D_0$ which is a partitioned matrix of order $2d$ into $2\times 2$ blocks whose entries are the $(i,j)$, $(i+d,j)$, $(i,j+d)$ and $(i+d,j+d)$ entries of $C, \ 1\le i,j\le d.$ Applying the same reordering to
$2I_2\otimes (AA^*+BB^*),$ one obtains  $(AA^*+BB^*)\otimes 2I_2.$ Since $A$ and $B$ are disjoint and quasisymmetric, each non-zero block of $D_0$ will have one of the following forms 
$$\left[ \begin {array}{rr} \epsilon_ix_i&\epsilon_ix_i \\ \epsilon_jx_i&-\epsilon_jx_i\end {array} \right] \ \ \ \ \ \ {\rm or} \ \ \ \ \ \ 
\left[ \begin {array}{rr} \epsilon_ix_i&-\epsilon_ix_i \\ \epsilon_jx_i&\epsilon_jx_i\end {array} \right]\! ,$$ where $\epsilon_{\ell}\in S.$ Multiplying $D_0$ on the right by $\frac{1}{2}I_d\otimes \!\left[ \begin {array}{cc} 1&1 \\ 1&-\end {array} \right]$ yields a matrix $D_1$ of order $2d$ with entries from $\big\{0, \epsilon_1 x_1, \ldots, \epsilon_k x_k\big\}$ whose non-zero $2\times 2$ blocks have one of the forms $A_ix_i$ or $B_ix_i$, where
\begin{align}\label{D1}
A_i=\left[ \begin {array}{cc} \epsilon_i&0 \\ 0&\epsilon_j\end {array} \right] \ \ \ \ \ \ {\rm or} \ \ \ \ \ \ 
B_i=\left[ \begin {array}{cc} 0&\epsilon_i \\ \epsilon_j&0\end {array} \right]\! ,
\end{align} and such that
 $D_1D_1^*=D_1^*D_1= (AA^*+BB^*)\otimes I_2.$
The $A_i$'s and $B_i$'s in  \eqref{D1} form another signed group, $S'$. Now matrices of the form $$\epsilon\otimes I_2=\left[ \begin {array}{cc} \epsilon&0 \\ 0&\epsilon\end {array} \right]\! \! , \ \ \ \epsilon\in S,$$ form a signed subgroup of $S'$ which is isomorphic to $S.$ Therefore, one can identify this signed subgroup with $S$ itself and consider $S'$ as an extension of $S.$ Replacing every $2\times 2$ block of $D_1$ which is one the forms in  \eqref{D1} or zero with corresponding $\epsilon'_ix_i,  \ \epsilon'_i\in S'$ or zero, gives the required matrix $D.$ Note that we identify $\epsilon\otimes I_2\in S'$ with $\epsilon \in S.$ 

Now if $\pi: S\rightarrow SP_m'\le SP_m$ is a faithful remrep of degree $m$, then it can be verified directly that the map $\pi': S'\rightarrow SP_{2m}'\le SP_{2m}$ which is uniquely defined by 
$$\left[ \begin {array}{cc} \epsilon_i&0 \\ 0&\epsilon_j\end {array} \right]\rightarrow \left[ \begin {array}{cc} \pi(\epsilon_i)&0_m \\ 0_m&\pi(\epsilon_j)\end {array} \right]\! , \ \ \ \ \left[ \begin {array}{cc} 0&\epsilon_i \\ \epsilon_j&0\end {array} \right]\rightarrow \left[ \begin {array}{cc} 0_m&\pi(\epsilon_i) \\ \pi(\epsilon_j)&0_m\end {array} \right]\! ,$$
is a faithful remrep of degree $2m$, where $0_m$ denotes the zero matrix of order $m.$

Finally, since $A$ and $B$ are circulant, $C$ consists of four circulant blocks, so $D_0$ and $D_1$ are  block-circulant 
with block size $2\times 2;$ whence $D$ is circulant and quasisymmetric. \qed
\end{proof}
%%%%%%%%%%%%%%%%%%%%%%%%%%%%%%%%%%%%

We now use Lemma \ref{ZS} and follow similar techniques  in $\cite{Craigen, Ivan}$ to show the following Theorem.
 \begin{theorem}\label{induct}
Suppose that $B_1, \ldots, B_n$ are disjoint quasisymmetric  circulant matrices of order $d$ with entries from $\big\{0, \epsilon_1x_1, \ldots, \epsilon_kx_k\big\},$ where  $\epsilon_{\ell}\in S_{\mathbb C},$ and the $x_{\ell}$'s are variables $(1\le \ell \le k)$, such that 
$$B_1B_1^*+\cdots +B_nB_n^*=\bigg(\sum_{\ell=1}^k u_{\ell}x_{\ell}^2\bigg)I_{d},$$
where the $u_{\ell}$'s are positive integers.
Then there is a quasisymmetric circulant $SOD\big(d; u_1, \ldots, u_k\big)$ for a signed group $S$ that admits a faithful remrep of degree $2^{n}$.
\end{theorem}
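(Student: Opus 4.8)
The plan is to fuse $B_1,\dots,B_n$ into a single matrix by applying Lemma~\ref{ZS} exactly $n-1$ times — once per fusion — starting from the degree-$2$ faithful remrep $i\mapsto R$ of $S_{\mathbb C}$ and doubling the degree of the ambient faithful remrep at each fusion, so that the last one has degree $2\cdot 2^{n-1}=2^n$. The fusions must be arranged in ``caterpillar'' order (the accumulated matrix is always fused with one further, still-untouched $B_j$); this is exactly what produces the exponent $2^n$, and the last paragraph explains why a more economical balanced-tree fusion is not available.

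Concretely I would set $D_1:=B_1$, $S^{(1)}:=S_{\mathbb C}$, and for $j=2,\dots,n$ define $D_j$ and $S^{(j)}$ by applying Lemma~\ref{ZS} with $A:=D_{j-1}$ and $B:=B_j$. The loop invariants to maintain are: $D_j$ is a circulant, quasisymmetric, normal matrix of order $d$ whose nonzero entries have the form $\epsilon_\ell x_\ell$ with $\epsilon_\ell$ in a signed group $S^{(j)}\ge S_{\mathbb C}$ admitting a faithful remrep of degree $2^j$; ${\rm supp}(D_j)=\bigcup_{i\le j}{\rm supp}(B_i)$; and $D_jD_j^*=\sum_{i\le j}B_iB_i^*$. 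At step $j$ the hypotheses of Lemma~\ref{ZS} hold for $A=D_{j-1}$, $B=B_j$: both are circulant and quasisymmetric; $A$ is normal (for $j=2$ because $D_1=B_1$ is a circulant with entries in the abelian group $S_{\mathbb C}$, hence commutes with the circulant $B_1^*$; for $j>2$ by the previous invariant); $A$ and $B$ are disjoint, as ${\rm supp}(B_j)$ is disjoint from $\bigcup_{i<j}{\rm supp}(B_i)={\rm supp}(D_{j-1})$ (the $B_i$ being pairwise disjoint); the signed-group elements appearing in $B_j$ lie in $S_{\mathbb C}$, which sits in $Z(S^{(j-1)})$ (shown below); and $S^{(j-1)}$ has a faithful remrep of degree $2^{j-1}$. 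Lemma~\ref{ZS} then returns exactly a $D_j$ obeying the invariants, with ${\rm supp}(D_j)={\rm supp}(D_{j-1}+B_j)$, $D_jD_j^*=D_{j-1}D_{j-1}^*+B_jB_j^*$, and degree doubled to $2^j$. After step $n$ we get $D_nD_n^*=\sum_{i=1}^nB_iB_i^*=\big(\sum_\ell u_\ell x_\ell^2\big)I_d$; since its nonzero entries are of the form $\epsilon_\ell x_\ell$ with $\epsilon_\ell\in S^{(n)}$, $D_n$ is a circulant quasisymmetric $SOD(d;u_1,\dots,u_k)$ over $S:=S^{(n)}$, and $S^{(n)}$ has a faithful remrep of degree $2^n$. (When $n=1$ nothing is fused: $D_1=B_1$ is already the required SOD, over $S_{\mathbb C}$ of degree $2^1$.)

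The one step requiring genuine care is the assertion $S_{\mathbb C}\subseteq Z(S^{(j)})$ for all $j$, which is what keeps the elements of $B_{j+1}$ central and hence makes $B_{j+1}$ admissible as the ``$B$''-side of the next fusion; I would prove it by induction on $j$. The case $j=1$ is trivial since $S^{(1)}=S_{\mathbb C}$ is abelian. For the inductive step, recall from the proof of Lemma~\ref{ZS} that $S^{(j)}$ is generated by the two block-shapes displayed in \eqref{D1} (with $\epsilon_i,\epsilon_j$ ranging over $S^{(j-1)}$) and that $S^{(j-1)}$ is identified inside $S^{(j)}$ with the block-scalars $\epsilon\otimes I_2$; a one-line $2\times 2$ computation shows that for $\sigma\in S_{\mathbb C}\subseteq Z(S^{(j-1)})$ the element $\sigma\otimes I_2$ commutes with both shapes in \eqref{D1}, so $\sigma\otimes I_2\in Z(S^{(j)})$. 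This also pinpoints why a balanced-tree fusion fails: already $S^{(2)}$ is in general non-abelian, so merging two accumulated matrices would require one of them to have central coefficients, which it need not, whereas in the caterpillar scheme the ``$B$''-side is always an original $B_j$ with coefficients in $S_{\mathbb C}$. Granting this centrality claim, the remainder is just the routine propagation of the invariants above, each part of which is supplied verbatim by Lemma~\ref{ZS}.
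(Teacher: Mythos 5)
Your proposal is correct and follows essentially the same route as the paper: iterate Lemma~\ref{ZS} in ``caterpillar'' order, fusing the accumulated matrix with one fresh $B_j$ at a time, starting from the degree-$2$ faithful remrep of $S_{\mathbb C}$ and doubling the degree at each of the $n-1$ fusions to reach $2^n$, while using the identification of $S_{\mathbb C}$ with central block-scalars in each extension $S^{(j)}$ to keep $B_{j+1}$ admissible as the central-coefficient side of the next fusion. Your explicit induction establishing $S_{\mathbb C}\subseteq Z(S^{(j)})$ makes precise the point the paper handles by noting that the elements of $S_{\mathbb C}$ are represented as blocks $\pm I_{2^{r}}\otimes R$ and $\pm I_{2^{r+1}}$ commuting with the images of $S^{(r)}$.
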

\begin{proof}\smartqed
$S_{\mathbb C}$ has a faithful remrep $\pi:S_{\mathbb C}\rightarrow SP'_2\le SP_2$ of degree 2 uniquely determined by $\pi(i)=R$, where $SP'_2=\big<R; \ R^2=-I\big>$. Applying Lemma \ref{ZS} to matrices $B_1$ and $B_2,$ one obtains a quasisymmetric normal circulant matrix $A_1$ of order $d$ with entries from $\big\{0, \epsilon_1^{(1)} x_1, \ldots, \epsilon_k^{(1)} x_k\big\},$ where  $\epsilon_{\ell}^{(1)}\in S_1$ $(1\le \ell \le k)$ such that $S_1\ge S_{\mathbb C}$ is a signed group with a faithful remrep of degree $2^2$.  
Also, $A_1A_1^*=B_1B_1^*+B_2B_2^*.$  Since ${\rm supp}(A_1)$ is the union of ${\rm supp}(B_1)$ and ${\rm supp}(B_2),$ $A_1$ is disjoint from $B_3, \ldots, B_n.$ 

Suppose that one has constructed a circulant quasisymmetric normal matrix $A_{r}$ of order $d$ with entries from $\big\{0, \epsilon_1^{(r)} x_1, \ldots, \epsilon_k^{(r)} x_k\big\},$ where $\epsilon_{\ell}^{(r)}\in S_{r} \ (1\le \ell \le k)$ such that $S_{r}\ge S_{r-1}$ is a signed group with a faithful remrep $\pi_r: S_r \rightarrow SP'_{2^{r+1}}\le SP_{2^{r+1}}$ of degree $2^{r+1}$. Moreover, $A_{r}$ is disjoint from $B_{r+2}, \ldots, B_n$ and
$$A_rA_r^*=B_1B_1^*+\cdots +B_{r+1}B_{r+1}^*.$$ 

By the assumption, $B_{r+2}$ is a quasisymmetric normal circulant matrix with entries from $\big\{0, \epsilon_1x_1, \ldots, \epsilon_kx_k\big\},$ where $\epsilon_{\ell}\in S_{\mathbb C}\ (1\le \ell \le k).$ One can view the $\epsilon_{\ell}$'s as elements in $Z\big(S_{r}\big)$ because we identified these elements as blocks $\pm I_{2^{r}}\otimes R$ and $\pm I_{2^{r+1}}$ in the proof of Lemma \ref{ZS} which commute with $\pi_r\big(\epsilon_{\ell}^{(r)}\big), \ 1\le \ell \le k.$ Therefore, by Lemma \ref{ZS}, there is a quasisymmetric normal circulant matrix $A_{r+1}$ with entries from $\big\{0, \epsilon_1^{(r+1)} x_1, \ldots, \epsilon_k^{(r+1)} x_k\big\},$ where $\epsilon_{\ell}^{(r+1)}\in S_{r+1} \ (1\le \ell \le k)$ such that $S_{r+1}\ge S_r$ is a signed group with a faithful remrep of degree $2^{r+2}$. Also, 
$$A_{r+1}A_{r+1}^*=A_rA_r^*+B_{r+2}B_{r+2}^*=B_1B_1^*+\cdots +B_{r+1}B_{r+1}^*+B_{r+2}B_{r+2}^*,$$ and by the same argument $A_{r+1}$ is disjoint from $B_{r+3}, \ldots, B_n.$ Applying this procedure $n-2$ times, there is a  quasisymmetric normal circulant matrix $A_{n-1}$ of order $d$ such that $$A_{n-1}A_{n-1}^*=B_1B_1^*+\cdots +B_nB_n^*=\bigg(\sum_{\ell=1}^k u_{\ell}x_{\ell}^2\bigg)I_{d},$$ which is a circulant quasisymmetric $SOD\big(d; \ u_1, \ldots, u_k\big)$ with the signed group \\ $S=S_{n-1}\ge S_{n-2} \ge \cdots \ge S_{\mathbb C}$ that admits a faithful remrep of degree $2^{n}$.\qed
\end{proof}
\begin{remark}
The circulant matrices in Theorem \ref{induct} are taken on the abelian signed group $S_\mathbb{C};$ however, if the signed group is not abelian, the circulant matrices that obtain from Lemma \ref{ZS} do not necessarily commute, and Theorem \ref{induct} may fail. As an example, if 
$B_1={\rm circ}(j,0)$ and $B_2={\rm circ}(0,k),$ where $j,k\in S_Q,$ then since $jk=-kj,$  $B_1B_2\neq B_2B_1.$ Therefore, Lemma \ref{ZS} does not apply in this case.
\end{remark}
 \begin{theorem}\label{cortoreal}
Suppose that $B_1, \ldots, B_n$ are disjoint quasisymmetric circulant matrices of order $d$ with entries from $\big\{0, \epsilon_1x_1, \ldots, \epsilon_kx_k\big\},$ where $\epsilon_{\ell}\in S_{\mathbb R},$ and the $x_{\ell}$'s are variables $(1\le \ell \le k)$, such that 
$$B_1B_1^*+\cdots +B_nB_n^*=\bigg(\sum_{\ell=1}^k u_{\ell}x_{\ell}^2\bigg)I_{d},$$
where the $u_{\ell}$'s are positive integers.
Then there is a circulant quasisymmetric $SOD\big(d; \  u_1, \ldots, u_k\big)$ for a signed group $S$ that admits a faithful remrep of degree $2^{n-1}$.
\end{theorem}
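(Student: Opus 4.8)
The plan is to carry out exactly the inductive construction used in the proof of Theorem \ref{induct}, but to begin the induction one step earlier, from the trivial signed group $S_{\mathbb R}$ together with its degree-$1$ faithful remrep, rather than from $S_{\mathbb C}$ and its degree-$2$ remrep. Since every application of Lemma \ref{ZS} merely doubles the degree of the ambient remrep, starting from degree $2^0=1$ instead of degree $2^1=2$ saves one factor of $2$ throughout and produces a remrep of degree $2^{n-1}$ in place of $2^{n}$. (Simply quoting Theorem \ref{induct} via $S_{\mathbb R}\le S_{\mathbb C}$ would only give degree $2^{n}$; the content of this theorem is the saving.)

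For the base step, note that $S_{\mathbb R}=\{1,-1\}$ admits the obvious faithful remrep $S_{\mathbb R}\to SP_1$ of degree $2^0=1$. The matrix $B_1$ is circulant with real (hence commuting) entries, so it is normal, and since $S_{\mathbb R}$ is abelian we have $Z(S_{\mathbb R})=S_{\mathbb R}$; thus the hypotheses of Lemma \ref{ZS} hold with $A=B_1$, $B=B_2$. Applying Lemma \ref{ZS} yields a quasisymmetric normal circulant matrix $A_1$ of order $d$, with entries over a signed group $S_1\ge S_{\mathbb R}$ that admits a faithful remrep of degree $2^1=2$, such that $A_1A_1^*=B_1B_1^*+B_2B_2^*$ and ${\rm supp}(A_1)={\rm supp}(B_1)\cup{\rm supp}(B_2)$; in particular $A_1$ is disjoint from $B_3,\ldots,B_n$. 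The inductive step is identical to the one in Theorem \ref{induct}: given a quasisymmetric normal circulant $A_r$ of order $d$ over $S_r\ge\cdots\ge S_{\mathbb R}$ with $A_rA_r^*=B_1B_1^*+\cdots+B_{r+1}B_{r+1}^*$, disjoint from $B_{r+2},\ldots,B_n$, and with $S_r$ admitting a faithful remrep of degree $2^r$, one applies Lemma \ref{ZS} to $A_r$ and $B_{r+2}$ to produce $A_{r+1}$ over $S_{r+1}\ge S_r$ with a faithful remrep of degree $2^{r+1}$. Iterating this $n-2$ times (so that Lemma \ref{ZS} is invoked $n-1$ times in all) gives a quasisymmetric circulant matrix $A_{n-1}$ of order $d$ with $A_{n-1}A_{n-1}^*=\sum_{i=1}^nB_iB_i^*=\big(\sum_{\ell=1}^k u_\ell x_\ell^2\big)I_d$, over a signed group $S=S_{n-1}$ admitting a faithful remrep of degree $2^{n-1}$; this is the required $SOD\big(d;\ u_1,\ldots,u_k\big)$.

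The one thing to be checked at each stage — and it is the same subtlety as in Theorem \ref{induct} — is that the ``incoming'' matrix $B_{r+2}$ genuinely has its entries in the center $Z(S_r)$ of the current signed group, which is precisely what makes Lemma \ref{ZS} applicable. This holds because the $B_i$ are defined over the trivial (in particular abelian) signed group $S_{\mathbb R}$, and the successive identifications performed inside the proof of Lemma \ref{ZS} send $\pm 1\in S_{\mathbb R}$ to the central scalar blocks $\pm I$ of each $S_r$. As the remark following Theorem \ref{induct} indicates, this is exactly where the argument would fail if the $B_i$ were allowed to come from a non-abelian signed group. I expect this centrality bookkeeping, rather than any genuinely new computation, to be the only real obstacle; the remainder is a transcription of the proof of Theorem \ref{induct} with the degree exponents shifted down by one.
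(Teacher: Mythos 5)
Your proposal is correct and is exactly the argument the paper intends: its proof of this theorem is a one-line reduction to the proof of Theorem \ref{induct}, replacing the starting signed group $S_{\mathbb C}$ (degree-$2$ remrep) by $S_{\mathbb R}$ (degree-$1$ remrep), so that the $n-1$ applications of Lemma \ref{ZS} end at degree $2^{n-1}$ instead of $2^{n}$. Your explicit base case, inductive step, and the check that the entries of the incoming $B_{r+2}$ land in $Z(S_r)$ simply spell out the details the paper leaves implicit.
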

\begin{proof}\smartqed
Similar to the proof of Theorem \ref{induct}, but in here since $S_{\mathbb R}$ has the trivial remrep of degree 1, the final signed group $S$ will have a remrep of degree $2^{n-1}.$\qed
\end{proof}
%%%%%%%%%%%%%%%%%%%%%%%%%%%%%%%%%%%%%%%%%%%%%%%%%%%%%%
\begin{example}\label{3tuplesod}
We explain how to use Theorem \ref{cortoreal} to find a $SOD\big(12; \ 4,4,4\big)$ for a signed group $S$ that admits a remrep of degree  
$8.$ Consider the following disjoint quasisymmetric circulant matrices of order $12$:

$B_1={\rm circ}\big(a,0,0,0,0,0,a,0,0,0,0,0\big),$

$B_2={\rm circ}\big(0,0,0,a,0,0,0,0,0,-a,0,0\big),$

$B_3={\rm circ}\big(0,b,c,0,0,0,0,0,0,0,c,-b\big),$

$B_4={\rm circ}\big(0,0,0,0,c,-b,0,-b,-c,0,0,0\big).$

Thus, $B_1B_1^*+B_2B_2^*+B_3B_3^*+B_4B_4^*=\big(4a^2+4b^2+4c^2\big)I_{12}.$
Apply Lemma \ref{ZS} to $B_1$ and $B_2$ to get a quasisymmetric normal circulant matrix of order $12$: $$A_1={\rm circ}\big(1a,0,0,\delta a,0,0,1a,0,0,-\delta a,0,0\big),$$ where  $\delta$ is in the signed group of order $2$: $$ S_1=\big<-1,\delta; \ \delta^2=1\big>$$ which admits a remrep of degree $2$ uniquely determined by
$1\rightarrow I_2$ and $\delta\rightarrow P.$ Since $B_1$ and $B_2$ are complementary, it follows that $A_1A_1^*=4a^2I_{12}.$ 

Applying Lemma \ref{ZS} again to $A_1$ and $B_3,$ there is a quasisymmetric normal circulant matrix of order $12$:
$$A_1={\rm circ}\big(1a,\gamma_1b,\gamma_2c,\gamma_3a,0,0,1a,0,0,-\gamma_3a,\gamma_2c,-\gamma_1b\big),$$
where $\gamma_1,\gamma_2,\gamma_3$ belong to the signed group of order $2^3$:
$$S_2=\Big<\gamma_1,\gamma_2,\gamma_3; \ \ \gamma_1^2=-\gamma_2^2=\gamma_3^2=1, \ \alpha\beta=-\beta\alpha; \ \alpha, \beta\in\{\gamma_1,\gamma_2,\gamma_3\}\Big>,$$ with a remrep of degree 4 which is uniquely determined by 
$$\gamma_1\rightarrow P\otimes I_2, \ \ \gamma_2\rightarrow R\otimes I_2, \ \ \gamma_3\rightarrow Q\otimes P.$$
Note that $A_2$ is not an SOD because $B_1, B_2$ and $B_3$ are not complementary.

Finally,  apply Lemma \ref{ZS} to $A_2$ and $B_4$ to get a quasisymmetric normal circulant matrix of order $12$:
$$A_3={\rm circ}\big(1a,\epsilon_1b,\epsilon_2c,\epsilon_3a,\epsilon_4c,-\epsilon_5b,1a,-\epsilon_5b,-\epsilon_4c,-\epsilon_3a,\epsilon_2c,-\epsilon_1b\big),$$
where $\epsilon_j, \ 1\le j\le 5$ belong to the signed group of order $2^5$:
$$S=\Big<\epsilon_1,\epsilon_2,\epsilon_3,\epsilon_4,\epsilon_5; \ \ \epsilon_1^2=-\epsilon_2^2=\epsilon_3^2=\epsilon_4^2=-\epsilon_5^2=1, \ \alpha\beta=-\beta\alpha; \ \alpha, \beta\in\{\epsilon_1,\epsilon_2,\epsilon_3,\epsilon_4,\epsilon_5\}\Big>,$$ with a remrep of degree 8 which is uniquely determined by 
$$\epsilon_1\rightarrow Q\otimes P\otimes I_2, \ \ \epsilon_2\rightarrow Q\otimes R\otimes I_2, \ \ \epsilon_3\rightarrow Q\otimes Q\otimes P, \ \ \epsilon_4\rightarrow P\otimes I_2\otimes I_2, \ \ \epsilon_5\rightarrow R\otimes I_2\otimes I_2.$$
So $A_3$ is a quasisymmetric circulant $SOD\big(12; \ 4,4,4\big).$ By Theorem \ref{sodtood}, there is an $$OD\big(8\cdot 12; \ 8\cdot 4,8 \cdot 4,8 \cdot 4\big).$$
\end{example}
%%%%%%%%%%%%%%%%%%%%%%%%%%%%%%%%%%%%%%%%%%%%%%%%%%%%%

Although Theorem \ref{cortoreal} shows that the degree of remrep is $2$ times less than the one in Theorem \ref{induct},  we have more complex Golay pairs than real ones. Thus, 
from now on, we just consider the complex case, and we refer the reader to \cite[chap. 6]{EGh} for the results that obtain from the real case. 

For $u$ a positive integer, denote by $\ell c(u)$ the least number of  complex Golay numbers that add up to $u$, and let $\ell c(0)=0$. 
Also, denote by $\ell' c(u)$ the least number of  complex Golay numbers in two variables that add up to $u$.
Indeed, $\ell' c(2u)\le \ell c(u).$ Note that Lemma \ref{twovariables} insures the existence of a complex Golay pair in two variables of length $2m$ if there exists a complex Golay pair of length $m$.

In the following theorem, we show how to use complex Golay pair and complex Golay pairs in two variables to construct SODs. 
\begin{theorem}\label{KTUPLESOD}
Let $\big(1, v_1, \ldots, v_{q}, w_1, w_1, \ldots, w_t, w_t\big)$ be a sequence of positive integers such that $v_i$'s, $1\le i\le q$, are disjoint and let $1+\sum_{\beta=1}^q v_{\beta}+2\sum_{\delta=1}^tw_{\delta}=u.$ Then there exists a full circulant quasisymmetric $SOD\big(4u; \ 4, 4v_1, \ldots, 4v_q, 4w_1, 4w_1, \ldots ,4w_t,4w_t\big)$ for some signed group $S$ that admits a remrep of degree $2^{n},$ where $n\le 2+2\sum_{\beta=1}^q  \ell c(v_{\beta})+2\sum_{\delta=1}^t\ell c(w_{\delta}).$ 
\end{theorem}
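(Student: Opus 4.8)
The plan is to derive the statement from Theorem~\ref{induct}. Put $u=1+\sum_{\beta=1}^{q}v_{\beta}+2\sum_{\delta=1}^{t}w_{\delta}$ and name the $1+q+2t$ variables $x_{0}$ (of intended type $4$), $x_{1},\dots,x_{q}$ (types $4v_{1},\dots,4v_{q}$), and $y_{1},y_{1}',\dots,y_{t},y_{t}'$ (types $4w_{1},4w_{1},\dots,4w_{t},4w_{t}$). It suffices to produce pairwise disjoint quasisymmetric circulant matrices $B_{1},\dots,B_{n}$ of order $4u$, with entries in $\{0\}\cup S_{\mathbb C}\cdot\{x_{0},x_{1},\dots,x_{q},y_{1},y_{1}',\dots,y_{t},y_{t}'\}$, such that $\bigcup_{i}{\rm supp}(B_{i})=\{1,\dots,4u\}$, $n\le 2+2\sum_{\beta}\ell c(v_{\beta})+2\sum_{\delta}\ell c(w_{\delta})$, and
\[
\sum_{i=1}^{n}B_{i}B_{i}^{*}=\Bigl(4x_{0}^{2}+\sum_{\beta=1}^{q}4v_{\beta}x_{\beta}^{2}+\sum_{\delta=1}^{t}4w_{\delta}\bigl(y_{\delta}^{2}+y_{\delta}'^{2}\bigr)\Bigr)I_{4u}.
\]
Given such $B_{i}$, Theorem~\ref{induct} yields a circulant quasisymmetric $SOD$ of this order and type over a signed group $S\ge S_{\mathbb C}$ admitting a faithful remrep of degree $2^{n}$; since (as in the proof of Theorem~\ref{induct}) the support of the resulting $SOD$ equals $\bigcup_{i}{\rm supp}(B_{i})$, it is full.

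To construct most of the $B_{i}$ I would write each $v_{\beta}$ as a sum of $\ell c(v_{\beta})$ complex Golay numbers and each $w_{\delta}$ as a sum of $\ell c(w_{\delta})$ complex Golay numbers. To a complex Golay number $g$ occurring in the decomposition of $v_{\beta}$ I attach a complex Golay pair of length $g$ scaled by the single variable $x_{\beta}$; to a complex Golay number $g$ occurring in the decomposition of $w_{\delta}$ I attach the complex Golay pair in the two variables $y_{\delta},y_{\delta}'$ of length $2g$ obtained from a length-$g$ complex Golay pair via Lemma~\ref{twovariables}. Since complex Golay sequences have no zero entries, in each resulting pair $(\mathcal A;\mathcal B)$ the sequence $\mathcal A$ is automatically quasireverse to $\mathcal B$, so the identity recalled in Section~\ref{sec:1} makes $D={\rm circ}\bigl(0_{(a+1)},\mathcal A,0_{(2b+1)},\mathcal B,0_{(a)}\bigr)$ a quasisymmetric circulant matrix of any admissible order. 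From each such pair I produce \emph{two} matrices of order $4u$: the matrix $D$ (with blocks placed inside a suitable region), and the matrix obtained from ${\rm circ}\bigl(0_{(a+1)},\mathcal A,0_{(2b+1)},-\mathcal B,0_{(a)}\bigr)$ by cyclically permuting its first row through $2u$ places. A cyclic shift through the half-period $2u$ preserves quasisymmetry, and for an appropriate placement of the blocks of $D$ it carries the support off itself, so the two matrices are disjoint; moreover one checks directly that the sum of their products equals $2\bigl(N_{\mathcal A}(0)+N_{\mathcal B}(0)\bigr)I_{4u}$, because the within-block autocorrelations of $\mathcal A$ and of $\mathcal B$ sum to zero ($(\mathcal A;\mathcal B)$ being complementary), while the cross-correlation contributions between the two blocks in $D$ and in its sign-flipped shifted companion occur at the same lags with opposite signs and therefore cancel. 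Adding these contributions over all pairs accounts for the terms $4v_{\beta}x_{\beta}^{2}$ and $4w_{\delta}(y_{\delta}^{2}+y_{\delta}'^{2})$, using $2\ell c(v_{\beta})$, resp.\ $2\ell c(w_{\delta})$, matrices.

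The main difficulty, and essentially the only nontrivial point, is the simultaneous layout: the padding parameters of all these matrices must be chosen so that the supports are pairwise disjoint and together exhaust $\{1,\dots,4u\}$. The length bookkeeping gives a total of $4+4\sum_{\beta}v_{\beta}+8\sum_{\delta}w_{\delta}=4u$, so a tiling is numerically possible, but there is a genuine obstruction: each $B_{i}$ has support invariant under the reflection $j\mapsto 4u+2-j$, and no reflection-invariant set can meet every pair $\{j,\,j+2u\}$ exactly once — the pairs $\{1,2u+1\}$ and $\{u+1,3u+1\}$ being unavoidable exceptions. I would therefore realise the type-$4$ part carried by $x_{0}$ directly with two extra matrices occupying precisely those four positions, namely the quasisymmetric circulants whose only nonzero entries are $x_{0}$ and $ix_{0}$ at positions $1,2u+1$, and at positions $u+1,3u+1$, respectively — each of which has its only nonzero autocorrelation at lag $2u$, which vanishes, so each contributes $2x_{0}^{2}I_{4u}$ — and then fit the Golay-pair matrices (and their half-period companions) onto the remaining $4u-4$ positions, whose $D$-supports form a reflection-invariant transversal of the remaining pairs. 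This is the step I expect to require the most care: verifying that reflection-invariance of supports, pairwise disjointness, exhaustion of $\{1,\dots,4u\}$, and the lag-matching needed for the cross-correlation cancellations can all be arranged at once. With the $B_{i}$ in hand, the count is $n\le 2+2\sum_{\beta}\ell c(v_{\beta})+2\sum_{\delta}\ell c(w_{\delta})$, the identity for $\sum_{i}B_{i}B_{i}^{*}$ holds by construction, and Theorem~\ref{induct} finishes the proof.
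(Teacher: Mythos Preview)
Your proposal is correct and is essentially the paper's own argument. The paper writes down exactly the layout you anticipate: two matrices $M_{1},M_{2}$ carrying the lone variable at positions $\{1,2u+1\}$ and $\{u+1,3u+1\}$, and for each Golay pair the two circulants $X_{\alpha\beta}$ and $Y_{\alpha\beta}$ (respectively $Z_{\gamma\delta}$ and $T_{\gamma\delta}$), where the second is precisely the half-period shift of the first with the $B$-block negated, placed via the running offset $S[\alpha,\beta]=\sum_{i<\alpha}V[i,v_{\beta}]+\sum_{j<\beta}v_{j}$ (and analogously $S'[\gamma,\delta]$). The only cosmetic differences are that the paper uses the real pair $(x,x)$ and $(-x,x)$ for $M_{1},M_{2}$---so they cancel jointly rather than individually as your $(x_{0},ix_{0})$ choice does---and that the paper phrases the two-variable part via $\ell'c(2w_{\delta})$ and then invokes $\ell'c(2w_{\delta})\le\ell c(w_{\delta})$, whereas you reach the same bound by going through Lemma~\ref{twovariables} directly.
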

\begin{proof}\smartqed
For each $\beta$, $1\le \beta \le q$, and each $\alpha$, $1\le \alpha \le \ell c(v_{\beta})$, let $\big(A[\alpha, v_{\beta}]; \ B[\alpha, v_{\beta}]\big)$ be a complex Golay pair in one variable $x_{\beta}$ of length $V[\alpha, v_{\beta}].$ From the definition of $\ell c(v_{\beta})$, for each $\beta$, $1\le \beta \le q,$  $\sum_{\alpha=1}^{\ell c(v_{\beta})}V[\alpha, v_{\beta}]=v_{\beta}.$ Let $S[\alpha, \beta]:=\sum_{i=1}^{\alpha-1}V[i,v_{\beta}]+\sum_{j=1}^{\beta-1}v_j.$
Also, for each $\delta$, $1\le \delta \le t$, and each $\gamma$, $1\le \gamma \le \ell' c(2w_{\delta})$, let $\big(C[\gamma, w_{\delta}]; \ D[\gamma, w_{\delta}]\big)$ be a
complex Golay pair of length  $W[\gamma, w_{\delta}]$ in two variables $y_{\delta}$ and $z_{\delta}$.
From the definition of $\ell' c(2w_{\delta})$,  for each $\delta$, $1\le \delta \le t,$  $\sum_{\gamma=1}^{\ell' c(2w_{\delta})}W[\gamma, w_{\delta}]=2w_{\delta}.$ Let $S'[\gamma, \delta]:=\sum_{i=1}^{\gamma-1}W[i,w_{\delta}]+2\sum_{j=1}^{\delta-1}w_j.$
For each $\beta$, $1\le \beta \le q$, and each $\alpha$, $1\le \alpha \le \ell c(v_{\beta})$, and for each $\delta$, $1\le \delta \le t$ and each $\gamma$, $1\le \gamma \le \ell' c(2w_{\delta})$, the following are $ n=2+2\sum_{\beta=1}^q  \ell c(v_{\beta})+2\sum_{\delta=1}^t\ell' c(2w_{\delta})$ circulant matrices of order $4u$:
\begin{align*}
&M_1={\rm circ}\big(x, \ {0}_{(2u-1)}, \ x, \ { 0}_{(2u-1)}\big), \\
&M_2={\rm circ}\big({ 0}_{(u)}, \ -x,\  { 0}_{(2u-1)}, \ x,\ { 0}_{(u-1)}\big), \\
 &X_{\alpha \beta}={\rm circ}\big( { 0}_{(S[\alpha, \beta]+1)}, \ A[\alpha, v_{\beta}], \ { 0}_{(4u-2S[\alpha+1, \beta]-1)}, \ B[\alpha, v_{\beta}],\ { 0}_{(S[\alpha, \beta])}\big), \\
&Y_{\alpha \beta}={\rm circ}\big({ 0}_{(2u-S[\alpha+1, \beta])},\  -B[\alpha, v_{\beta}], \ { 0}_{(2S[\alpha, \beta]+1)},\ A[\alpha, v_{\beta}], \ { 0}_{(2u-S[\alpha+1, \beta]-1)}\big), \\
&Z_{\gamma \delta}={\rm circ}\big( { 0}_{(v+S'[\gamma, \delta]+1)}, \ C[\gamma, w_{\delta}], \ { 0}_{(4u-2v-2S'[\gamma+1, \delta]-1)}, \ D[\gamma, w_{\delta}],\ { 0}_{(v+S'[\gamma, \delta])}\big), \\
&T_{\gamma \delta}={\rm circ}\big({ 0}_{(2u-v-S'[\gamma+1, \delta])},\  -D[\gamma, w_{\delta}], \ { 0}_{(2S'[\gamma, \delta]+2v+1)},\ C[\gamma, w_{\delta}], \ { 0}_{(2u-v-S'[\gamma+1, \delta]-1)}\big).
\end{align*}
It can be seen that the above circulant matrices are disjoint and quasisymmetric such that 
\begin{align*}
\sum_{i=1}^2 M_iM_i^*&+\sum_{\beta=1}^q\sum_{\alpha=1}^{\ell c(v_{\beta})}\big(X_{\alpha \beta}X^*_{\alpha \beta}+Y_{\alpha \beta}Y^*_{\alpha \beta}\big)+\sum_{\delta=1}^t\sum_{\gamma=1}^{\ell' c(2w_{\delta})}\big(Z_{\gamma \delta}Z^*_{\gamma \delta}+T_{\gamma \delta}T^*_{\gamma \delta}\big)\\ &=4\bigg(x^2+\sum_{\beta=1}^q\big(v_{\beta}x_{\beta}^2\big)+\sum_{\delta=1}^t\big(w_{\delta}y_{\delta}^2+w_{\delta}z_{\delta}^2\big)\bigg)I_{4u}.
\end{align*}
Thus, by Theorem \ref{induct}, there exists a full circulant quasisymmetric
$$SOD\big(4u; \ 4, 4v_1, \ldots, 4v_q, 4w_1, 4w_1, \ldots ,4w_t,4w_t\big)$$ 
for a signed group $S$ which admits a remrep of degree $2^{n}$, where 
\begin{align*}
\displaystyle n=2+2\sum_{\beta=1}^q  \ell c(v_{\beta})+2\sum_{\delta=1}^t\ell' c(2w_{\delta})\le 2+2\sum_{\beta=1}^q  \ell c(v_{\beta})+2\sum_{\delta=1}^t\ell c(w_{\delta}).
\end{align*}\qed
\end{proof}
\begin{example}
Consider the $4$-tuple $(1,v_1,v_2,v_3)=(1,5,7,17)$.  By Theorem \ref{KTUPLESOD}, there is a circulant quasisymmetric
$SOD\big(4\cdot 30; \ 4\cdot 1, 4\cdot 5, 4\cdot 7, 4\cdot 17\big),$
which admits a remrep of degree $2^{n}$, where $n=2+2\ell c(5)+2\ell c(7)+2\ell c(17)=2+2+4+4=12.$ By Theorem \ref{sodtood},
there is an 
$$OD\big(2^{14}\cdot 30; \ 2^{14}\cdot 1, 2^{14}\cdot 5, 2^{14}\cdot 7, 2^{14}\cdot 17\big).$$
\end{example}
\begin{example}
Let $(1,w_1,w_1,w_2,w_2,w_3,w_3,w_4,w_4)=(1,3,3,5,5,11,11,13,13)$.  By Theorem \ref{KTUPLESOD}, there is a circulant quasisymmetric
$$SOD\big(4\cdot 65; \ 4\cdot 1, 4\cdot 3_{(2)}, 4\cdot 5_{(2)}, 4\cdot 11_{(2)},4\cdot 13_{(2)}\big),$$
which admits a remrep of degree $2^{n}$, where $n=2+2\ell c(3)+2\ell c(5)+2\ell c(11)+2\ell c(13)=10.$ By Theorem \ref{sodtood},
there is an $$OD\Big(2^{12}\cdot 65; \ 2^{12}\cdot 1, 2^{12}\cdot 3_{(2)}, 2^{12}\cdot 5_{(2)}, 2^{12}\cdot 11_{(2)}, 2^{12}\cdot 13_{(2)}\Big).$$
\end{example}
%%%%%%%%%%%%%%%%%%%%%%%%%%%%%%%%%%%%%%%%%%%%%%%%%%%%%
\section{Bounds for the asymptotic existence orthogonal designs}\label{sec4}
%%%%%%%%%%%%%%%%%%%%%%%%%%%%%%%%%%%%%%%%%%%%%%%%%%%%%%

In this section, we obtain some upper bounds for the degree of remrep in Theorem \ref{KTUPLESOD}, and then we find some upper bounds for the asymptotic existence of ODs.

To get a better upper bound for the degree of remrep for any $k$-tuple $\big(u_1, u_2, \ldots, u_k\big)$ of positive integers, from now on,
 we assume that  $\ell c(u_1)-\ell c(u_1-1)$ is greater than or equal to $\ell c(u_i)-\ell c(u_i-1)$ for all $2\le i\le k$. We also define $\log(0)=0$, and in here the base of $\log$ is $2$.

%%%%%%%%%%%%%%%%%%%%%%%%%%%%%%%%%%%%%%%%%%%%%%%%%%%%%%%

Livinskyi \cite[chap. 5]{Ivan}, by a computer search,  showed that each positive integer $u$ can be presented as sum of at most  $3\big\lfloor \log_{2^{26}} (u) \big\rfloor+4$ complex Golay numbers. Thus
\begin{align}\label{13bound}
\ell c(u)\le 3\Big\lfloor \dfrac{1}{26}\log (u) \Big\rfloor+4\le \dfrac{3}{26}\log (u)+4.
\end{align}
 \begin{theorem}\label{better}
Suppose that  $\big(u_1, u_2, \ldots, u_k\big)$ is a $k$-tuple of positive integers and let $u_1+\cdots +u_k=u.$ 
Then there is a full circulant quasisymmetric $SOD\big(4u; \ 4u_1, 4u_2, \ldots, 4u_k \big)$ for some signed group $S$ that admits a remrep of degree $2^{n},$ where $ n\le (3/13)\log (u_1-1)+(3/13)\sum_{i=2}^k \log (u_i)+8k+2 .$ 
\end{theorem}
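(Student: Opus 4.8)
The plan is to reduce the general $k$-tuple $(u_1,\dots,u_k)$ to the shape required by Theorem~\ref{KTUPLESOD}, namely a sequence of the form $\big(1,v_1,\dots,v_q,w_1,w_1,\dots,w_t,w_t\big)$ with the $v_i$ pairwise disjoint, and then bound the exponent $n=2+2\sum_\beta \ell c(v_\beta)+2\sum_\delta \ell' c(2w_\delta)$ using the Livinskyi estimate \eqref{13bound}. First I would peel off one copy of the first coordinate: write $u_1=1+(u_1-1)$, so the constant $1$ supplies the ``$1$'' in the sequence of Theorem~\ref{KTUPLESOD} and $u_1-1$ becomes an ordinary $v$-type entry (if $u_1=1$ this term simply contributes $\ell c(0)=0$, consistent with the convention $\log(0)=0$). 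The remaining coordinates $u_2,\dots,u_k$ are also placed as $v$-type entries. The disjointness hypothesis on the $v_i$ in Theorem~\ref{KTUPLESOD} is a hypothesis about the underlying sequences, not the integers, and is automatically arranged because each $v_\beta$ is assigned its own fresh variable $x_\beta$; so $q=k$, $t=0$, and the total order is $4u$ with $u=1+(u_1-1)+u_2+\cdots+u_k=u_1+\cdots+u_k$, as required.

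Next I would invoke Theorem~\ref{KTUPLESOD} directly with this sequence to obtain a full circulant quasisymmetric $SOD\big(4u;\,4,4(u_1-1),4u_2,\dots,4u_k\big)$; but since $4=4\cdot 1$ and the first block of weight $4$ corresponds to the detached constant together with — no: more carefully, the ``$1$'' entry and the ``$u_1-1$'' entry both carry the variable $x_1$, so their contributions $4\cdot 1$ and $4(u_1-1)$ add to give a single variable $x_1$ appearing with weight $4u_1$. This is precisely the mechanism by which one recovers type $\big(4u_1,4u_2,\dots,4u_k\big)$ rather than type $\big(4,4(u_1-1),4u_2,\dots,4u_k\big)$: one does \emph{not} assign a separate variable to the constant term. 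Thus the output is a full circulant quasisymmetric $SOD\big(4u;\,4u_1,4u_2,\dots,4u_k\big)$ admitting a remrep of degree $2^n$ with
\begin{align*}
n \;=\; 2 + 2\,\ell c(u_1-1) + 2\sum_{i=2}^{k}\ell c(u_i).
\end{align*}

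Finally I would apply the bound \eqref{13bound}, $\ell c(m)\le \tfrac{3}{26}\log(m)+4$ (valid also at $m=0$ since $\ell c(0)=0\le 4$ and $\log 0=0$), to each term:
\begin{align*}
n \;\le\; 2 + 2\Big(\tfrac{3}{26}\log(u_1-1)+4\Big) + 2\sum_{i=2}^{k}\Big(\tfrac{3}{26}\log(u_i)+4\Big)
\;=\; \tfrac{3}{13}\log(u_1-1) + \tfrac{3}{13}\sum_{i=2}^{k}\log(u_i) + 8k + 2,
\end{align*}
which is exactly the claimed bound. I expect the main subtlety — not really an obstacle, but the point that needs care in the write-up — to be the bookkeeping around the detached constant: one must explain clearly that splitting $u_1=1+(u_1-1)$ and reusing the \emph{same} variable $x_1$ for the ``$1$'' slot of Theorem~\ref{KTUPLESOD} and for the $v_1=u_1-1$ slot merges the two weight-$4$ and weight-$4(u_1-1)$ pieces into one weight-$4u_1$ variable, and that this is legitimate within the construction of Theorem~\ref{KTUPLESOD} (the matrices $M_1,M_2$ there already carry a variable that can be identified with $x_1$). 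The role of the ordering assumption ``$\ell c(u_1)-\ell c(u_1-1)\ge \ell c(u_i)-\ell c(u_i-1)$'' is to make this choice — detaching the constant from the first coordinate rather than from some other — the one that minimizes $\ell c(u_1-1)+\sum_{i\ge 2}\ell c(u_i)$ among all choices of which coordinate absorbs the ``$1$''; once that is observed, the estimate follows immediately.
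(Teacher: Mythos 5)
Your proposal is correct and follows essentially the same route as the paper: apply Theorem~\ref{KTUPLESOD} to the $(k+1)$-tuple $\big(1,u_1-1,u_2,\ldots,u_k\big)$ (all entries treated as $v$-type) and then invoke the bound \eqref{13bound}. Your additional observation that the constant slot must share the variable $x_1$ with the $u_1-1$ slot, so that the weights $4$ and $4(u_1-1)$ merge into $4u_1$, is precisely the step the paper's proof leaves implicit.
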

\begin{proof}\smartqed
Apply Theorem \ref{KTUPLESOD} to the $(k+1)$-tuple $\big(1,u_1-1, u_2,\ldots, u_k\big)$. So  there is a full circulant quasisymmetric 
$SOD\big(4u; \ 4u_1, 4u_2, \ldots, 4u_k \big)$ for some signed group $S$ that admits a remrep of degree $2^{n},$ where $ n\le 2+2\ell c(u_1-1)+2\sum_{i=2 }^k \ell c(u_i).$ Use  \eqref{13bound} to obtain the desired.\qed
\end{proof}
\begin{remark}\label{secondbound}
For any given $k$-tuple $\big(u_1, u_2, \ldots, u_k\big)$ of positive integers, one may write it as the $(k+1)$-tuple $\big(1, u_1-1, u_2, \ldots, u_k\big)$, and then sort its elements to get the $(k+1)$-tuple $\big(1,v_1, \ldots , v_q, w_1, w_1, \ldots, w_t, w_t\big),$ where $v_i$'s are disjoint and then use Theorem \ref{KTUPLESOD} and   \eqref{13bound} to obtain the following bound:
\begin{align*}
n&\le 2+2\sum_{i=1}^q  \ell c(v_{i})+2\sum_{j=1}^t\ell c(w_{j})\\ &\le 
2+\dfrac{3}{13}\sum_{i=1}^q\log (v_i)+\dfrac{3}{13}\sum_{j=1}^t \log (w_j)+8(q+t),
\end{align*}
where $n$ is the exponent of the degree of remrep.
\end{remark}

By Theorem \ref{sodtood} and Theorem \ref{better}, we have the following asymptotic existence result.
\begin{theorem}\label{thirdbound}
Suppose $\big(u_1, u_2, \ldots, u_k\big)$ is a $k$-tuple of positive integers. Then for each 
$n\ge N$, there is an $OD\big(2^n\sum_{j=1}^ku_j; \ 2^nu_1, \ldots, 2^nu_k\big),$
where $N\le (3/13)\log (u_1-1)+(3/13)\sum_{i=2}^k \log (u_i)+8k+4.$
\end{theorem}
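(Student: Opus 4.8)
The plan is to combine Theorem~\ref{better} with Theorem~\ref{sodtood} in a direct way. Theorem~\ref{better} produces, for the given $k$-tuple $\big(u_1,\ldots,u_k\big)$ with $u_1+\cdots+u_k=u$, a full circulant quasisymmetric $SOD\big(4u;\ 4u_1,\ldots,4u_k\big)$ over a signed group $S$ admitting a remrep of degree $2^{m}$, where $m\le (3/13)\log(u_1-1)+(3/13)\sum_{i=2}^k\log(u_i)+8k+2$. The point is that Theorem~\ref{sodtood} turns an $SOD\big(n;\ u_1,\ldots,u_k\big)$ over a signed group with a remrep of degree equal to the order of a Hadamard matrix into an $OD\big(mn;\ mu_1,\ldots,mu_k\big)$; so I need a Hadamard matrix of degree $2^{m}$, which always exists (e.g.\ the Sylvester construction $H_{2}^{\otimes m}$). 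Applying Theorem~\ref{sodtood} with this Hadamard matrix of order $2^m$ to the $SOD$ of order $4u$ yields an $OD\big(2^{m}\cdot 4u;\ 2^{m}\cdot 4u_1,\ldots,2^{m}\cdot 4u_k\big)$, i.e.\ an $OD\big(2^{m+2}u;\ 2^{m+2}u_1,\ldots,2^{m+2}u_k\big)$.

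Next I would bootstrap from this single design to all larger powers of $2$. The elementary observation is that if an $OD\big(2^{n_0}u;\ 2^{n_0}u_1,\ldots,2^{n_0}u_k\big)$ exists, then tensoring on the left by $H_2$ (or equivalently by $I_2\otimes(\cdot)$ followed by a sign pattern, i.e.\ replacing the design $X$ by $\big[\begin{smallmatrix}X&X\\X&-X\end{smallmatrix}\big]$) produces an $OD\big(2^{n_0+1}u;\ 2^{n_0+1}u_1,\ldots,2^{n_0+1}u_k\big)$, since the orthogonality relation $XX^*=(\sum u_ix_i^2)I$ is preserved and the weights double. Iterating, an $OD\big(2^{n}\sum_j u_j;\ 2^nu_1,\ldots,2^nu_k\big)$ exists for every $n\ge n_0:=m+2$. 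Setting $N:=m+2$ gives the stated bound
\begin{align*}
N\le (3/13)\log(u_1-1)+(3/13)\sum_{i=2}^k\log(u_i)+8k+4,
\end{align*}
since $m+2\le (3/13)\log(u_1-1)+(3/13)\sum_{i=2}^k\log(u_i)+8k+4$. Note also that the resulting $OD$ is full (no zero entries), because the $SOD$ from Theorem~\ref{better} is full, the Hadamard matrix has no zero entries, and both the Kronecker-type substitution in Theorem~\ref{sodtood} and the doubling step preserve fullness.

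There is essentially no hard step here; the work has all been done in Theorem~\ref{better} and Theorem~\ref{sodtood}. The only points that require a line of care are: first, checking that a Hadamard matrix of the precise order $2^{m}$ is available — this is immediate from Sylvester — so the hypothesis ``$m$ is the order of a Hadamard matrix'' in Theorem~\ref{sodtood} is met; and second, verifying the doubling lemma, namely that $X\mapsto\big[\begin{smallmatrix}X&X\\X&-X\end{smallmatrix}\big]$ sends an $OD$ of type $(w_1,\ldots,w_k)$ to an $OD$ of type $(2w_1,\ldots,2w_k)$ of twice the order with no new zeros. If one prefers to avoid the explicit doubling, the same conclusion follows by re-running Theorem~\ref{sodtood} with a Hadamard matrix of order $2^{m+j}$ for each $j\ge 0$, but the doubling argument is cleaner and makes the ``for each $n\ge N$'' uniformity transparent. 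The main (very mild) obstacle is simply bookkeeping the exponents: making sure the factor $4=2^2$ coming from the order $4u$ of the $SOD$ is correctly absorbed so that the final threshold is $N=m+2$ rather than $N=m$.
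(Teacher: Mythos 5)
Your proposal is correct and follows essentially the same route as the paper, whose proof of this theorem consists precisely of invoking Theorem~\ref{better} to get the $SOD\big(4u;\,4u_1,\ldots,4u_k\big)$ with remrep degree $2^m$ and then Theorem~\ref{sodtood} with a Sylvester Hadamard matrix of order $2^m$, absorbing the factor $4=2^2$ into the exponent to obtain $N\le m+2$. Your explicit doubling step $X\mapsto H_2\otimes X$ to cover all $n\ge N$ is a detail the paper leaves implicit, and it is verified correctly.
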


Livinskyi \cite[chap. 5]{Ivan} used complex Golay, Base, Normal and other sequences  (see \cite{Dokovic,Kouk2, Koun2,Koun}) to show that  each positive integer $u$ can be presented as sum of 
\begin{align} \label{30bound}
s\le \dfrac{1}{10}\log (u) +5 
\end{align}
pairs $(A_k[u];B_k[u])$ for $1\le k\le s$ such that $A_k[u]$ and $B_k[u]$ have the same length for each $k$, $1\le k\le s$, with elements from $\{\pm 1, \pm i\}$, and the set 
$\big\{A_1[u], B_1[u], \ldots , A_s[u], B_s[u]\big\}$ is a set of complex complementary sequences with weight $2u$.  In the following theorem, we use this set of complex complementary sequences.
%%%%%%%%%%%%%%%%%%%%%%%%%%%%%%%%%%%%%%%%%%%%%%%%%%%%%%%%%%%%%%%%%%%%
\begin{theorem}\label{lastbound}
Suppose $\big(v_1, v_2, \ldots, v_k\big)$ is a $k$-tuple of positive integers. Then for each 
$n\ge N$, there is an $OD\big(2^n\sum_{j=1}^kv_j; \ 2^nv_1, \ldots, 2^nv_k\big),$
where $N\le (1/5)\log (v_1-1)+(1/5)\sum_{i=2}^k \log (v_i)+10k+4.$
\end{theorem}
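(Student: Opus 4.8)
The plan is to mirror the proof of Theorem~\ref{thirdbound}, replacing the complex Golay estimate \eqref{13bound} by the stronger estimate \eqref{30bound} coming from Livinskyi's computer search on complex complementary sequences. First I would invoke \eqref{30bound}: for each positive integer $u$ there is a set $\big\{A_1[u],B_1[u],\ldots,A_s[u],B_s[u]\big\}$ of complex complementary sequences of weight $2u$ with $s\le (1/10)\log(u)+5$, where $A_k[u]$ and $B_k[u]$ have common length for every $k$. As in Lemma~\ref{twovariables}, from each such pair I can manufacture a two-variable complex complementary pair $\big(\!(xA_k[u],yB_k[u]);(yA_k[u],-xB_k[u])\!\big)$ of doubled length, and by possibly reversing and conjugating one member I may assume the two members of each pair are quasireverse to each other, so that the circulant padding construction preceding the non-periodic autocorrelation definition applies.

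Next I would build, exactly as in the proof of Theorem~\ref{KTUPLESOD}, a system of disjoint quasisymmetric circulant matrices of order $4u$ (where $u=\sum_j v_j$): two "skeleton" matrices $M_1,M_2$ carrying the leading coefficient $1$, and for each index $i$ with $2\le i\le k$ a block of $2s_i$ circulant matrices built from the $s_i$ pairs attached to $v_i$ (and a block of $2s_1$ built from the pairs attached to $v_1-1$), placed into disjoint coordinate windows determined by partial sums, and padded with zeros in the quasireverse pattern so each matrix is quasisymmetric. The complementarity of the sequences gives
\begin{align*}
\sum_{i=1}^2 M_iM_i^*+\sum_{\text{blocks}} \big(\cdots\big)=4\Bigg(\sum_{j=1}^k v_j x_j^2\Bigg)I_{4u},
\end{align*}
so Theorem~\ref{induct} produces a full circulant quasisymmetric $SOD\big(4u;\ 4v_1,\ldots,4v_k\big)$ over a signed group $S$ admitting a faithful remrep of degree $2^{m}$, where $m$ is $2$ plus twice the total number of pairs used, i.e. $m\le 2+2\big(s_1+\cdots+s_k\big)$ with $s_1\le (1/10)\log(v_1-1)+5$ and $s_i\le (1/10)\log(v_i)+5$ for $i\ge 2$. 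This yields $m\le (1/5)\log(v_1-1)+(1/5)\sum_{i=2}^k\log(v_i)+10k+2$.

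Finally I would apply Theorem~\ref{sodtood}: since $2^{m}$ is the order of a Hadamard matrix (a Sylvester/Kronecker matrix), the $SOD\big(4u;\ 4v_1,\ldots,4v_k\big)$ over $S$ yields an $OD\big(2^{m}\cdot 4u;\ 2^{m}\cdot 4v_1,\ldots,2^{m}\cdot 4v_k\big)$, that is, an $OD\big(2^{m+2}\sum_j v_j;\ 2^{m+2}v_1,\ldots,2^{m+2}v_k\big)$; setting $N=m+2$ gives the claimed bound $N\le (1/5)\log(v_1-1)+(1/5)\sum_{i=2}^k\log(v_i)+10k+4$. Since the order of a full OD of type $(2^nv_1,\ldots,2^nv_k)$ is then obtained for all $n\ge N$ by tensoring with $I_{2^{n-N}}$ (or, more cleanly, because the construction can be run with $2^{m'}$ any larger power of two, the degree of remrep being free to increase), the existence for every $n\ge N$ follows. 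The main obstacle I anticipate is purely bookkeeping: verifying that the coordinate windows holding the various padded circulant blocks are genuinely disjoint and that each block is quasisymmetric, so that the hypotheses of Theorem~\ref{induct} are met; this is the same combinatorial index-chasing as in Theorem~\ref{KTUPLESOD} and carries over verbatim once $s$ complex complementary pairs replace the single complex Golay pair in each slot.
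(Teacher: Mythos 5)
Your proposal follows essentially the same route as the paper: rewrite $(v_1,\ldots,v_k)$ as $(1,v_1-1,v_2,\ldots,v_k)$, use Livinskyi's complex complementary sequences with the bound \eqref{30bound} to build the skeleton circulants $M_1,M_2$ and the padded blocks $X_{\alpha\beta},Y_{\alpha\beta}$ of order $4\sum_j v_j$, feed them to Theorem \ref{induct} and then Theorem \ref{sodtood}, and arrive at the same count $N\le m+2$ with $m\le 2+2\sum_\beta s_\beta$. One small correction: tensoring an OD with $I_{2^{n-N}}$ does not scale the type $(2^Nv_1,\ldots,2^Nv_k)$ up to $(2^nv_1,\ldots,2^nv_k)$, so the passage to all $n\ge N$ rests on your parenthetical alternative (the remrep degree, hence the Sylvester Hadamard matrix in Theorem \ref{sodtood}, can be taken to be any larger power of two), which is the correct mechanism.
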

\begin{proof}\smartqed
Suppose $\big(v_1, v_2, \ldots, v_k\big)$ is a $k$-tuple of positive integer. Let $\sum_{j=1}^kv_j=v.$
For simplicity, we assume that $u_1=v_1-1$ and $u_i=v_i$ for $2\le i\le k$.

For each $\beta$, $1\le \beta\le  k$, let $\big\{A_1[u_{\beta}], B_1[u_{\beta}], \ldots , A_{s_{\beta}}[u_{\beta}], B_{s_{\beta}}[u_{\beta}]\big\}$ be a set of complex complementary sequences with weight $2u_{\beta}$ such that for each $\alpha$, $1\le \alpha \le s_{\beta}$,
$A_{\alpha}[u_{\beta}]$ and $B_{\alpha}[u_{\beta}]$ have the same length, $V[\alpha, u_{\beta}]$. From \eqref{30bound}, for each $\beta$, $1\le \beta\le  k$,
\begin{align}\label{sbeta}
s_{\beta}\le \dfrac{1}{10}\log u_{\beta}+5.
\end{align}
Suppose that $x$ and $x_{\beta}$, $1\le \beta\le k$ are variables. 
Let $M_1={\rm circ}\big(x, \ { 0}_{(2v-1)}, \ x, \ { 0}_{(2v-1)}\big)$ and $M_2={\rm circ}\big({ 0}_{(v)}, \ -x,\  { 0}_{(2v-1)}, \ x,\ { 0}_{(v-1)}\big).$
For each $\beta$, $1\le \beta\le  k$, and  each $\alpha$, $1\le \alpha \le s_{\beta}$, let
\begin{align*}
&X_{\alpha \beta}={\rm circ}\Big( 0_{(S[\alpha, \beta]+1)}, \ x_{\beta}A_{\alpha}[u_{\beta}], \ 0_{(4v-2S[\alpha+1, \beta]-1)}, \ x_{\beta}B_{\alpha}[u_{\beta}],\ 0_{(S[\alpha, \beta])}\Big), \\
&Y_{\alpha \beta}={\rm circ}\Big(0_{(2v-S[\alpha+1, \beta])},\  -x_{\beta}B_{\alpha}[u_{\beta}], \ 0_{(2S[\alpha, \beta]+1)},\ x_{\beta}A_{\alpha}[u_{\beta}], \ 0_{(2v-S[\alpha+1, \beta]-1)}\Big),
\end{align*}
where $S[1,1]=0$ and $ S[a, b]=\sum_{j=1}^{a-1} \sum_{i=1}^{b} V[j,u_i]$, for $1\le b\le k$ and $1<a\le s_{b}+1$.
It can be seen that the above circulant matrices are disjoint and quasisymmetric of order $4v$ such that 
\begin{align*}
\sum_{i=1}^2 M_iM_i^*+\sum_{\beta=1}^k\sum_{\alpha=1}^{s_{\beta}}\big(X_{\alpha \beta}X^*_{\alpha \beta}+Y_{\alpha \beta}Y^*_{\alpha \beta}\big) =4\Big(x^2+\sum_{\beta=1}^k\big(u_{\beta}x_{\beta}^2\big)\Big)I_{4v}.
\end{align*}
Thus, by Theorem \ref{induct}, there exists a full circulant quasisymmetric
$SOD\big(4v; \ 4, 4u_1, \ldots, 4u_k\big)$ 
for a signed group $S$ which admits a remrep of degree $2^{m}$, where $ m=2+2\sum_{\beta=1}^k s_{\beta}.$ 
From Theorem \ref{sodtood} and the upper bounds for the $s_{\beta}$'s, \eqref{sbeta}, there is an $OD\big(2^nv; \ 2^n, 2^nu_1, \ldots, 2^nu_k\big),$ and so there is
an $OD\big(2^nv; \ 2^nv_1, \ldots, 2^nv_k\big),$ where $n\le (1/5)\log (v_1-1)+(1/5)\sum_{i=2}^k \log (v_i)+10k+4$.\qed
\end{proof}
\begin{acknowledgement}
The paper constitutes a part of the author's Ph.D. thesis written under the direction of Professor Hadi Kharaghani at the University of Lethbridge. 
The author would like to thank Professor Hadi Kharaghani for introducing the problem and his very useful guidance toward solving the problem and also Professor Rob Craigen for his time and great help.  
\end{acknowledgement}

\bibliography{Ghaderpour}{}
\bibliographystyle{spmpsci}

\end{document}